\title{Self-Dual Ramsey Degrees for Trees}
\author{Sebastian Junge}
\date{}
\address{Department of Mathematics, Texas State University, Math and Computer Science Building 470, San Marcos, TX, 78666}
\email{iiw22@txstate.edu}
\newtheorem*{definition}{Definition}
\newtheorem*{lemma*}{Lemma}
\newtheorem{theorem}{Theorem}[section]
\newtheorem{lemma}[theorem]{Lemma}
\newtheorem{proposition}[theorem]{Proposition}
\begin{document}
	\tikzcdset{row sep/my size/.initial=.01 em}
	\begin{abstract} We consider a Ramsey statement for pairs of maps between trees, where one is an embedding as defined by Deuber and the other is a rigid surjection as defined by Solecki. We show that there is no Ramsey Theorem for pairs of maps where the coloring depends on both coordinates. On the other hand, we give a characterization of the Ramsey degrees for such pairs. Furthermore, we show that our theorem on Ramsey Degrees for pairs of maps between trees implies the Ramsey Theorem for pairs of maps between linear orders as proved by Solecki.
	\end{abstract}
	\maketitle
	\section{Introduction}
	Ramsey Theory has a long tradition of considering linear orders, in fact Ramsey's original Theorem can be stated as the following: for any finite linear orders $K$ and $L$ and any $r>0$ there is a finite linear order $M$ so that for any coloring $\chi$ of the increasing injections from $K$ to $M$ there is an increasing injection $j\colon L\to M$ so that $$j\circ \{i\colon K\to L| \ i\text{ is an increasing injection}\}$$
	is $\chi$-monochromatic. Ramsey theorists have also considered maps other than increasing injections between linear orders. The most famous example of this is the Graham-Rotschild Theorem which concerns colorings of certain surjective maps $M\to K$ instead of injective maps $K\to M$. Since this procedure involves switching the order the new Theorem is referred to as the Dual Ramsey Theorem. Traditionally, and more generally, Ramsey theorists have also looked at maps between trees such as in \cite{26},\cite{29},\cite{31},\cite{28},\cite{22},\cite{30}. We will continue this work by looking at pairs of maps between trees. For more context and an introduction to Ramsey Theory the reader may consult Ne\u set\u ril's survey in \cite{32}. 
	
	In \cite{12} Solecki gives a Ramsey statement for certain pairs of maps, called connections, between linear orders. This theorem is known as the Self-Dual Ramsey Theorem. The Self-Dual Ramsey Theorem combines Ramsey's Theorem with the Graham-Rotschild Theorem. In \cite{22} Solecki proves a Dual Ramsey Theorem for trees which also generalizes the Dual Ramsey Theorem for linear orders. A natural question to ask is whether there is a Self-Dual Ramsey Theorem for appropriately defined connections between trees. The goal of this paper is to answer this question. 
	
	First, we show that there is Self-Dual Ramsey Theorem for all maps between trees where the coloring depends on both coordinates. In particular, we show that any definition of connection which includes all rigid surjections (which can be viewed canonically as pairs of maps) and includes a pair which is not a rigid surjection does not fulfill a Ramsey statement.  However, we define connections between trees in a way that follows naturally from the definition of connections between linear orders given in \cite{12}. In Theorem 3.2 we give a characterization of the Ramsey degrees of connections, in particular showing that the Ramsey degrees are finite. Statements about Ramsey degrees measure the distance from the exact Ramsey Theorem. The proof of the upper bounds for the Ramsey degrees will be our main theorem, Theorem 3.4. While Theorem 3.4 is not a Ramsey statement, we show in Section 5 that it implies the Self-Dual Ramsey Theorem for linear orders from \cite{12}. Furthermore this theorem also implies the Dual Ramsey Theorem for trees in \cite{22}, though this is a bit of circular logic since Theorem \ref{2.18} is proved using the main lemma from the proof of the Dual Ramsey Theorem for trees. 
	
	We now describe the organization of this paper. In Section 2, we review the definitions of connections between linear orders from \cite{12} and the definition of rigid surjections for trees from \cite{22}. We also give our notation for Ramsey theory in that section. In Section 3, we state our definition of connections between trees, which follows naturally from the definition of connections between linear orders. In that section we state our main results. In Section 4 we prove the failure of the Self-Dual Ramsey degrees between trees and give lower bounds for the Ramsey degrees for connections. In Section 5, we  prove Theorem \ref{2.18} giving the upper bounds for the Ramsey and show how this theorem implies the Self-Dual Ramsey Theorem for linear ordars from \cite{12}.
	
	This paper is based on work in my Ph.D. thesis.
	
	\section{Background}
	In this section we present the two results that we aimed to combine: the Self-Dual Ramsey Theorem for linear orders and the Dual Ramsey Theorem for trees. We review the theory needed to state these theorems, while showing the connections between them. Then we give the standard notation of Ramsey theory as seen through a categorical viewpoint that we use in this paper. \textbf{From now on, all linear orders and trees are assumed to be finite}.
\subsection{Connections for linear orders}
We state the definitions needed to formulate the Self-Dual Ramsey theorem for linear orders from \cite{12}. We start by defining increasing injections and rigid surjections.
\begin{definition}
	An \textbf{increasing injection} between linear orders $K$ and $L$ is a map $i\colon K\to L$ so that if $x,y\in K$ and $x< y$, then $i(x)<i(y)$.
\end{definition}
These maps are sometimes referred to as strongly increasing maps, but we will use the terminology increasing injection instead. We now define a natural dual for increasing injections. 
\begin{definition}
	Let $K$ and $L$ are linear orders. A map $s\colon L\to K$ is a \textbf{rigid surjection} if the map $i_s\colon K\to L$ defined by $i_s(x)=\textnormal{min}(s^{-1}(x))$ is an increasing injection.
\end{definition}
There are multiple definitions of rigid surjection. We choose this definition, because it generalizes nicely to the definition of rigid surjections for trees. 
This definition of rigid surjections is motivated by the concept of Galois Connections, which Solecki uses in \cite{22} to generalize to  rigid surjections for trees. For ease of notation we let $(g,f)\colon A\leftrightarrows B$ denote that $f\colon A\to B$ and $g\colon B\to A$. 

\begin{definition}
	Let $A,B$ be partial orders. A pair $(f,g)\colon A\to B$ is a Galois connection if $f(g(y))=y$ for all $y\in B$ and $g(f(x))\leq x$ for all $x\in A$.
\end{definition}

Note that if $K$ and $L$ are linear orders and $s\colon L\to K$ is a rigid surjection, then $(s,i_s)$ is a Galois connection. In fact, $s$ is a rigid surjection if and only if there is an increasing injection $i$ so that $(s,i)$ is a Galois connection.

We consider certain pairs of maps between partial orders (in particular linear orders) which we call connections. In doing so we follow \cite{12}.
\begin{definition}
	If $L$ and $K$ are partial orders a \textbf{connection} between $L$ and $K$ is a pair $(s,i)\colon L\leftrightarrows K$ so that for all $x\in K$, $$s(i(x))=x \text{ and for all }y<i(x), \ s(y)\leq x.$$
\end{definition}
Note that if $(s,i)$ is a connection between linear orders then $i$ is an increasing injection and $s$ is a rigid surjection. 

Next we state the Ramsey theorems for these three types of maps. The Ramsey statement for increasing injections is equivalent to Ramsey's Theorem.
\begin{theorem}[Ramsey's Theorem]
	For any linear orders $K$ and $L$ and any $r>0$ there is a linear order $M$ so that for any coloring $\chi$ of the increasing injections from $K$ to $M$ there is an increasing injection $j\colon L\to M$ so that $$j\circ \{i\colon K\to L| \ i\text{ is an increasing injection}\}$$
	is $\chi$-monochromatic.
	\end{theorem}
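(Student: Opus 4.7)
The plan is to reduce this formulation to the classical finite Ramsey theorem on subsets. My first observation will be that because $K$ carries a fixed linear order and an increasing injection $i\colon K\to M$ is determined by its image, the set of increasing injections $K\to M$ is in canonical bijection with $\binom{M}{k}$, where $k=|K|$. Under the same identification, for any increasing injection $j\colon L\to M$, the family $j\circ\{i\colon K\to L\mid i\text{ increasing injection}\}$ corresponds exactly to $\binom{j(L)}{k}$, since its elements are precisely those increasing injections $K\to M$ whose image lies in $j(L)$. This dictionary turns the statement to be proved into the classical form: for every $k,\ell,r$ there is an $N$ such that any $r$-coloring of $\binom{[N]}{k}$ admits a $\ell$-subset $Y\subseteq[N]$ with $\binom{Y}{k}$ monochromatic, where $\ell=|L|$.

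Given a coloring $\chi$ of increasing injections $K\to M$, I would transport it along the bijection above to a coloring $\widetilde{\chi}\colon\binom{M}{k}\to r$. Applying the classical statement I obtain $N$ and a subset $Y\subseteq M=[N]$ of size $\ell$ with $\binom{Y}{k}$ $\widetilde{\chi}$-monochromatic. The unique order-preserving bijection $j\colon L\to Y$ is then an increasing injection, and tracing through the identification shows that $j\circ\{i\colon K\to L\mid i\text{ increasing injection}\}$ is $\chi$-monochromatic. This reduction is purely formal; the substantive content is the classical Ramsey statement.

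For the classical statement I would proceed by induction on $k$. The base case $k=1$ is the pigeonhole principle: choose $N\geq r(\ell-1)+1$. For the inductive step, given a bound $N'=N'(k-1,\ell',r)$ for $(k-1)$-subsets, I would choose $N$ large enough to iterate the standard construction of a sequence $a_1<a_2<\cdots<a_T$ in $[N]$ together with shrinking ``reservoirs'' $A_1\supseteq A_2\supseteq\cdots$ where, at stage $t$, the reservoir $A_t$ is chosen so that the induced coloring of $(k-1)$-subsets of $A_t$, obtained from $\chi$ by prepending $a_t$, is monochromatic on $A_t$; this uses the inductive hypothesis for $k-1$ with parameter $\ell'$ large enough to drive the next step. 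After $T$ stages with $T$ large compared to $r$ and $\ell$, pigeonhole on the resulting ``principal colors'' assigned to $a_1,\dots,a_T$ yields a monochromatic $\ell$-subset, as required.

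The main obstacle is not conceptual but notational: making sure the bijection between increasing injections and subsets is compatible with postcomposition by $j$, so that ``monochromatic on $\binom{Y}{k}$'' really transfers to ``monochromatic on $j\circ\{i\}$''. Once that is unpacked, the proof is just the Erd\H{o}s--Szekeres-style induction above.
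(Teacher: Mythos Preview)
Your argument is correct: the bijection between increasing injections $K\to M$ and $k$-element subsets of $M$ is compatible with postcomposition by $j$ exactly as you describe, and the inductive proof of the classical Ramsey theorem you sketch is the standard one. There is nothing to object to mathematically.

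However, there is no comparison to make with the paper's proof, because the paper does not prove this statement. It appears in Section~2 (Background) as a classical result, stated without proof alongside the Graham--Rothschild theorem and Solecki's self-dual theorem for linear orders. The paper's original contributions begin in Section~3 with the results on connections between trees; Ramsey's Theorem itself is invoked only once later (in Section~5.4, where the Ramsey property of $(Fin,\leq)$ is used), and there too it is simply cited. So your proposal supplies a proof the paper deliberately omits as well-known.
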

The Ramsey statement for rigid surjections is known as the Dual Ramsey Theorem. It is a consequence of the Graham-Rothschlid Theorem which has many different formulations.
\begin{theorem}[Graham Rothschild]
For any linear orders $K$ and $L$ and any $r>0$ there is a linear order $M$ so that for any coloring $\chi$ of the rigid surjections from $M$ to $K$ there is a rigid surjection $t\colon T\to S$ so that $$\{i\colon K\to L| \ i\text{ is an increasing injection}\}\circ t$$
is $\chi$-monochromatic.
\end{theorem}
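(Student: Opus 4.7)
The plan is to derive the Dual Ramsey / Graham-Rothschild Theorem from the Hales-Jewett theorem via the classical parameter-word reformulation. The first step is to recast the combinatorial data. With $|K|=k$ and $|M|=m$, a rigid surjection $s\colon M\to K$ is determined by the word $w_s=s(0)s(1)\cdots s(m-1)\in K^m$ whose first occurrences of the letters $0,1,\dots,k-1$ appear in that order; equivalently, $s$ is an ordered partition of $M$ into $k$ non-empty blocks, indexed so that the minimum of block $j$ precedes the minimum of block $j+1$. The corresponding increasing injection $i_s\colon K\to L$ from the definition preceding the theorem selects precisely these minima.

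Under this dictionary, a rigid surjection $t\colon M\to L$ is an ordered partition of $M$ into $l=|L|$ blocks, and each composition of the form (rigid surjection $L\to K$)$\,\circ\, t$ is an ordered coarsening of that partition into $k$ blocks. This is exactly the structure of $k$-parameter subwords of an $l$-parameter word of length $m$ over a trivial constant alphabet: the blocks of $t$ play the role of the $l$ variables, and the map $L\to K$ records how those variables are collapsed. (Once this translation is in hand, it is straightforward to rewrite the statement of the theorem — modulo the typographical $T\to S$, which I read as $t\colon M\to L$ — in the parameter-word language.)

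Second, I would prove the required parameter-word Ramsey statement by induction on $l-k$. The base case $l=k$ is trivial. For the inductive step from $l$ to $l+1$, given a coloring of $k$-parameter words of length $m$, one restricts attention to the variable positions of a prospective $(l+1)$-parameter word and applies the Hales-Jewett theorem to the induced coloring of an appropriate combinatorial cube; the monochromatic line produced by Hales-Jewett corresponds to identifying two of the variables in a parameter-word of length $m$, which is exactly the mechanism that absorbs one variable and permits induction. Iterating yields the required $M$.

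The main obstacle is the bookkeeping in the translation: specifically, verifying that the first-occurrence rigidity condition corresponds to the canonical left-to-right ordering of variables used throughout the parameter-word induction, and that the compositions ranged over in the theorem match the specializations of the chosen $l$-parameter word under arbitrary rigid surjections $L\to K$. Once this alignment is nailed down, the Hales-Jewett iteration is mechanical and unwinding the dictionary gives the statement in the original rigid-surjection formulation.
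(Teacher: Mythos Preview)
The paper does not prove this theorem. It is stated in Section~2.1 as known background (attributed to Graham and Rothschild) and is used only to motivate the tree analogues developed later; no argument for it appears anywhere in the paper. There is therefore no proof in the paper to compare your proposal against.

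That said, your proposal outlines the classical derivation of the Dual Ramsey Theorem from the Hales--Jewett theorem via the parameter-word dictionary, and the translation you describe (rigid surjections $M\to K$ $\leftrightarrow$ ordered partitions of $M$ into $|K|$ blocks $\leftrightarrow$ parameter words over a trivial alphabet, with composition by a rigid surjection $M\to L$ corresponding to specialization of an $|L|$-parameter word) is the standard one and is correct. Your reading of the typo $t\colon T\to S$ as $t\colon M\to L$ is also correct. The inductive scheme on $l-k$ using Hales--Jewett is essentially how the result is established in the literature, though the usual treatment packages the step as an iterated (product) Hales--Jewett argument rather than a single line at each stage; as you note, the only real content is the bookkeeping that matches the first-occurrence rigidity condition to the left-to-right ordering of parameters.
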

The Ramsey Statement for connections of linear orders the Self-Dual Ramsey Theorem for linear orders in \cite{12}.
\begin{theorem}[\cite{12}, Theorem 2.1]\label{2.9}
	For any linear orders $K$ and $L$ and any $r>0$ there is a linear order $M$ so that for any coloring $\chi$ of the connections from $K$ to $M$ there is a connection $(t,j)\colon M\leftrightarrows L$ so that $$\{(s\circ t, j\circ i)| (s,i)\colon L\leftrightarrows S\text{ is a connection}\}$$
	is $\chi$-monochromatic.
\end{theorem}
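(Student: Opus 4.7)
The plan is to combine Ramsey's Theorem and the Graham--Rothschild Theorem in a two-stage pigeonhole, exploiting the decomposition of a connection into a rigid surjection and a compatible increasing injection. Specifically, a connection $(s,i)\colon M\leftrightarrows K$ is determined by the rigid surjection $s\colon M\to K$ together with, for each $x\in K$, a choice of $i(x)$ in the ``initial sub-fiber'' $F_s(x) = s^{-1}(x)\cap [0,\min\{y : s(y)>x\})$. So a coloring $\chi$ of connections extending a fixed $s$ is nothing but a coloring of the product $\prod_{x\in K}F_s(x)$.

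First I would regularize the fiber structure by taking $M$ to be a suitably ``blown up'' linear order built from $L$, chosen so that for the rigid surjection $t\colon M\to L$ to be extracted later, each composite $s\circ t$ (where $s\colon L\to K$ is a rigid surjection) has initial sub-fibers $F_{s\circ t}(x)$ of uniformly large prescribed size. This converts the $s$-dependent product $\prod_{x\in K}F_s(x)$ into one with a fixed shape, independent of $s$, and therefore puts the induced colorings into a common finite alphabet.

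Next I would apply Graham--Rothschild to the induced coloring of rigid surjections $\sigma\colon M\to K$ whose color of $\sigma$ records the restriction of $\chi$ to all connections extending $\sigma$ (legible in the uniform alphabet produced by the regularization). This yields a rigid surjection $t\colon M\to L$ making the induced color constant along $\{s\circ t : s\colon L\to K \text{ rigid surjection}\}$. Finally, within the now-uniform product of initial sub-fibers, I would apply Ramsey's Theorem iteratively (once for each $x\in K$) to extract an increasing injection $j\colon L\to M$ such that $(t,j)$ is a connection and $\chi$ is constant on $\{(s\circ t, j\circ i) : (s,i)\colon L\leftrightarrows K \text{ is a connection}\}$.

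The main obstacle is the coupling between $s$ and $i$ through the $s$-dependent set $F_s(x)$: this is what forces the initial regularization and the careful interleaving of Graham--Rothschild (on the surjection coordinate) with Ramsey (on the injection coordinate). A subtle bookkeeping point is to verify throughout both stages that the extracted pair $(t,j)$ is itself a connection, i.e.\ that $j(y)\in F_t(y)$ for every $y\in L$; this compatibility constraint has to survive the product-Ramsey extraction, which is the most delicate part of the argument.
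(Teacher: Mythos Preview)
Your approach is plausible and is in the spirit of Solecki's original argument in \cite{12}: decompose a connection into a rigid surjection plus a choice of ``where in the initial sub-fiber'' the injection lands, then homogenize the surjection coordinate via Graham--Rothschild and the injection coordinate via (a product form of) Ramsey. The sketch is thin in places---in particular the ``regularization'' of fiber sizes and the verification that the final $(t,j)$ is a connection need real work---but the strategy is sound.

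The paper, however, does not argue this way. It derives Theorem~\ref{2.9} as a corollary of its main tree result, Theorem~\ref{2.18}. Linear orders are viewed as trees (with $\sqsubseteq\,=\,\leq$); for such trees every non-root vertex has at most one immediate successor, so the invariant set $A$ is all of $K\setminus\{\min K\}$. Using Proposition~\ref{1.1} one passes through an auxiliary tree $T$ in which every relevant $i_s(x)$ has two immediate successors, forcing the set $B=\{x:i_s(x)\neq i(x)\}$ to be all of $K\setminus\{\min K\}$ after composition. Theorem~\ref{2.18} then gives the Ramsey statement for the subcategory $Conn_{L*}$ of connections that preserve the minimum. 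A final functor $\gamma\colon Conn_L\to(Fin,\leq)$ and an (FP) argument, using Ramsey's Theorem on the target, removes the minimum-preservation restriction.

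In short: your route is a direct two-ingredient pigeonhole (Graham--Rothschild on the surjection side, Ramsey on the injection side), and would reprove the theorem independently; the paper's route is deliberately indirect, because the whole point of Section~5.4 is to demonstrate that the tree theorem genuinely subsumes the linear-order theorem. Your argument would not serve that purpose, even if correct.
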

The main objective of this paper is to consider the case when $K$ and $L$ are trees instead of linear orders in Theorem \ref{2.9}. To do this we define  versions of increasing injections and rigid surjections for trees following Deuber and Solecki.
\subsection{Trees}
We review the definitions of trees, embedding between trees as defined by Deuber in \cite{26}, and rigid surjections given by Solecki in \cite{22}. 
\begin{definition}
	A \textbf{tree} is a partial ordered set $(T,\sqsubseteq_T)$ which has a least $\sqsubseteq_T$ element called the $\textbf{root}$ and so that for all $x\in T$, the predecessors of $x$ are linearly ordered by $\sqsubseteq_T$.
	
\end{definition}
For all vertices $v,w$ in a tree $T$, there is a largest common predecessor of $v$ and $w$ called the meet of $v$ and $w$ which we denote by $v\wedge_T w$. We will drop the subscript for $\wedge$ and $\sqsubseteq$ when the tree in question is clear. Using the idea of $\wedge$ we  define ordered trees.
\begin{definition}
	If $T$ is a tree and $v\in T$ we let $im_T(v)$ denote all immediate successors of $v$. A tree is \textbf{ordered} if there is a linear order $\leq_v$ on $im(v)$ for all $v\in T$. These orders extend to a linear order on $T$ given by $v\leq_T w$ if $v\sqsubseteq w$ or both $w\not\sqsubseteq v$ and $x\leq_{v\wedge w}y$ where $x,y\in im_T(v\wedge w)$, $x\subseteq_T v$ and $y\subseteq_T w$.
\end{definition}
\textbf{From now on all trees we consider will be ordered}. Note that any linear order $L$ can be viewed as a tree where $\sqsubseteq_L=\leq_L$.  We turn our attention to maps between trees.
\begin{definition}[Deuber]
	If $S,T$ are trees an \textbf{embedding} $i\colon S\to T$ is a function so that the following hold
	\begin{enumerate}
		\item $i$ sends the root of $S$ to the root of $T$.
		\item If $x,y\in S$ and $x<_S y$, then $i(x)<_T i(y)$.
		\item For all $x,y\in S$, $i(x\wedge_S y)=i(x)\wedge_T i(y)$.
	\end{enumerate}
\end{definition}
We will use this definition of embedding to define rigid surjections following \cite{22}. This idea generalizes the definition  of rigid surjections for linear orders in Section 2.1.
\begin{definition}
	Let $S,T$ be trees, we say a map $s\colon T\to S$ is a \textbf{rigid surjection} if there is an embedding $i\colon S\to T$, so that for all $x\in S$ and $y\in T$, $$s(i(x))=x \text{ and } i(s(y))\sqsubseteq_T y$$
\end{definition}
By definition a map $s$ is a rigid surjection if there is an embedding $i$ so that $(s,i)$ is a Galois connection. We remark that if such an embedding exists, then it is unique. Indeed, for any rigid surjection $s$, let $i_s$ be given by $$i_s(x)=\bigwedge s^{-1}(x).$$
Then $i_s$ is the unique embedding so that $(s,i_s)$ is a Galois connection. We think of $i_s$ as the smallest possible embedding so that $s(i_s)$ is the identity.

Now that we have defined rigid surjections between trees, we give the main result of \cite{22} which is a Ramsey statement for these maps.
\begin{theorem}[{\cite[Theorem 2.3]{22}}]\label{2.12}
	Let $S$ and $T$ be trees. Then for any $r>0$, there is a tree $V$, so that for any $r$-coloring $\chi$ of the rigid surjections from $V$ to $S$ there is a rigid surjection $t\colon V\to T$ such that, $$\{s\colon T\to S| \ s\textnormal { is a rigid surjection}\}\circ t$$
	is $\chi$-monochromatic. 
\end{theorem}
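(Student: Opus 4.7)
The plan is to prove this by induction on the ``complexity'' of $T$ relative to $S$---most naturally on the number of vertices of $T$ that do not lie in the image of any embedding $S\to T$---with the Graham-Rothschild Theorem for linear orders as the key tool driving each inductive step. The guiding intuition is that a rigid surjection between trees is assembled, level by level, from rigid surjections between the fans of immediate successors at each vertex, glued together by the embedding $i_s$ running in the opposite direction.

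First I would record a combinatorial parametrization. Fix $S$ and a candidate $V$. A rigid surjection $s\colon V\to S$ is determined by its embedding $i_s\colon S\to V$ together with the labeling that assigns, to each non-image vertex $y\in V$, its image $s(y)\in S$; the Galois constraint $i_s(s(y))\sqsubseteq y$ forces $s(y)$ to lie on the root-to-$y$ branch in $V$, so for fixed $i_s$ the labeling data range over an explicit finite product. This decomposes the Ramsey problem into controlling an ``embedding part'' and a ``labeling part'' separately.

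Next I would proceed by induction on the number of vertices by which $T$ exceeds $S$. The base case $T=S$ is trivial, since every rigid surjection $V\to S$ reflecting $T$ is essentially an embedding. For the inductive step I would remove a topmost vertex of $T$ to obtain a smaller tree $T^-$, apply the inductive hypothesis to $(S,T^-)$ to get a tree $V_0$, and then extend $V_0$ by attaching a suitably large fan of leaves above the vertex of $V_0$ where the deleted vertex of $T$ ought to land. A final application of Graham-Rothschild to this new fan---viewed as a linear order of immediate successors of a single vertex---absorbs the additional labeling coordinate introduced by the extra vertex of $T$.

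The main obstacle is the entanglement between the embedding part and the labeling part of a rigid surjection: the set of valid labels for each non-image vertex depends on where the embedding sends the vertices of $S$, so one cannot color the two parts independently. I would handle this by a two-stage argument---first re-index the coloring so that it depends only on an ``embedding type'' together with labeling data, then run the construction above to monochromatize the labeling coordinates while the embedding type varies over a Ramsey witness obtained inductively. Getting this bookkeeping right is the heart of Solecki's main lemma from \cite{22}, which the author of the present paper later reuses to prove Theorem~\ref{2.18}.
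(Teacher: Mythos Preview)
The paper does not contain its own proof of this statement. Theorem~\ref{2.12} is quoted in Section~2 as a background result from~\cite{22}; the only argument the present paper offers is the short paragraph after Theorem~3.4 observing that Theorem~3.4 implies Theorem~\ref{2.12} (define $\chi'(s,i)=\chi(s)$, apply Theorem~3.4, and note that $B=\emptyset$ for all pairs $(s,i_s)$). The author explicitly flags this as ``a bit of circular logic'' since the proof of Theorem~3.4 rests on Lemma~\ref{2.13}, which is itself the main lemma of Solecki's original proof of Theorem~\ref{2.12} in~\cite{22}. So there is no independent proof here to compare your proposal against.

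As for the proposal itself: you are sketching, at a high level, the strategy of~\cite{22} rather than anything in the present paper, and you say as much in your last sentence. A couple of points are loose. Your base case ``$T=S$'' is not actually trivial in the sense you suggest---there are many rigid surjections $V\to S$ even when $T=S$, and one still needs a Ramsey witness $V$; what is trivial is that $V=S$ works since then the only rigid surjection is the identity. More importantly, the inductive parameter you name (``number of vertices by which $T$ exceeds $S$'') is not quite the right one, and the step ``remove a topmost vertex of $T$, attach a fan, apply Graham--Rothschild'' glosses over exactly the difficulty you identify at the end: the dependence of the admissible labels on the embedding. Solecki's actual induction in~\cite{22} is organized differently---it peels off the $\leq_S$-largest leaf of $S$ (not of $T$) and works with sealed rigid surjections on initial subtrees, which is precisely what Lemma~\ref{2.13} encodes. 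Your sketch gestures toward the right circle of ideas but does not yet isolate the inductive statement that makes the argument go through.
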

\subsection{The Ramsey property}
We give our notation for the standard ideas found in a categorical approach to Ramsey theory.

\begin{definition}
	Fix a category $\mathcal{C}$, if $A,B,C\in \textnormal{Ob}(\mathcal{C})$ and $r>0$, then we say $C$ is a Ramsey witness for $A$ and $B$ (denoted $C\to (B)^{A}_{r}$) if for any $r$-coloring $\chi$ of $\textnormal{Hom}_{\mathcal{C}}(A,C)$ there is  $f\in \textnormal{Hom}_{\mathcal{C}}(B,C)$ so that $f\circ \textnormal{Hom}_{\mathcal{C}}(A,B)$ is $\chi$-monochromatic. 
	
	A category $\mathcal{C}$ has the \textbf{Ramsey property} if for all $A,B\in \textnormal{Ob}(\mathcal{C})$ and for all $r>0$, there is $C\in \textnormal{Ob}(\mathcal{C})$ so that $C\to (B)^{A}_{r}$.
\end{definition}
	In some cases Ramsey theorists study categories which don't have the Ramsey property, but instead we consider different degrees of failure. More specifically we have the following definition. 
	\begin{definition}
	Fix a category $\mathcal{C}$. For any $A,B\in \text{Ob}(\mathcal{C})$ we say that the \textbf{Ramsey degree} of $A$ and $B$ is $k$, denoted by $rd(A,B)=k$, if $k$ is the smallest number where for all $r>0$ there is a $C\in \mathcal{C}$ so that for any coloring  of $\text{Hom}_{\mathcal{C}}(A,C)$ with $r$ colors there is a $f\in \text{Hom}_{\mathcal{C}}(B,C)$ so that the coloring attains at most $k$ colors on $f\circ \text{Hom}_{\mathcal{C}}(A,B)$. \\
	
	For a single $A\in \text{Ob}(\mathcal{C})$ we let the Ramsey Degree of $A$, denoted by $rd(A)$, be given as $$rd(A)=\text{sup}_{B\in\text{Ob}(\mathcal{C})}rd(A,B)$$
This supremum may be $\infty$. 
\end{definition}
In the literature these are referred to as small Ramsey degrees (since big Ramsey degrees occur in infinite Ramsey theory). Since we only cover finite Ramsey Theory, we simply say Ramsey degree.

 Note that a category $\mathcal{C}$ has the Ramsey property if and only if for all $A\in\text{Ob}(\mathcal{C})$, $rd(A)=1$.  Thus $rd(A)>1$ measure how much the Ramsey property fails, with the worst case scenario being that $rd(A)=\infty$. Therefore a common property that Ramsey theorists look for is all Ramsey degrees being finite. 
\section{Connections for trees}
Given the Dual Ramsey Theorem for trees and the Ramsey Theorem for connections of linear orders, we ask whether there is a nice definition of connection for trees that does have the Ramsey property. First we must formalize what we mean by a ``nice" definition. We will consider categories $\mathcal{C}$, whose objects are all trees and whose morphisms are pairs of maps $(s,i)$ between trees, with the standard composition. Furthermore, we wish for all $ \mathcal{C}$ to satisfy the following:
\begin{enumerate}
	\item For all trees $S,T$, and $(s,i)\in \text{Hom}_{\mathcal{C}}(S,T)$, $s\colon T\to S$ is a rigid surjection,  $i\colon S\to T$  is an embedding, and $s\circ i=\text{Id}_{S}$.
	\item For all trees $S$, $T$, and a rigid surjection $s\colon T\to S$, we have that\\ $(s,i_s)\in \text{Hom}_{\mathcal{C}}(S,T)$.
	\item There are trees $S$, $T$, and a $(s,i)\in \text{Hom}_{\mathcal{C}}$ so that $i\neq i_s$.
\end{enumerate}
 The first property is that the pairs are partial inverses. Condition (2) asserts that we include the rigid surjections (since they can be view as pairs $(s,i_s)$). We want (3) to hold since otherwise all maps in the category depend only on the rigid surjections. Surprisingly, the following theorem answers the question in the negative.

\begin{theorem}\label{4.1}
	Let $\mathcal{C}$ be category whose objects are all trees and whose morphisms are pairs of maps $(s,i)$ between trees, with the standard composition and so that the following hold, \begin{enumerate}
		\item For all trees $S,T$, and $(s,i)\in \text{Hom}_{\mathcal{C}}(S,T)$, $s\colon T\to S$ is a rigid surjection,  $i\colon S\to T$  is an embedding, and $s\circ i=\text{Id}_{S}$.
		\item For all trees $S$, $T$, and a rigid surjection $s\colon T\to S$, we have that $(s,i_s)\in \text{Hom}_{\mathcal{C}}(S,T)$.
		\item There are trees $S$, $T$, and a $(s,i)\in \text{Hom}_{\mathcal{C}}$ so that $i\neq i_s$.
	\end{enumerate}Then $\mathcal{C}$ does not have the Ramsey Property.
\end{theorem}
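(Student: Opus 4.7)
The plan is to use condition (3) to pinpoint a non-canonical morphism in some $\textnormal{Hom}_{\mathcal{C}}(A,B)$, use condition (2) to produce its canonical companion sharing the same first component, and then deploy a 2-coloring by canonicity to obstruct every potential Ramsey witness.

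First, apply (3) to fix trees $A,B$ and a morphism $(s_0,i_0) \in \textnormal{Hom}_{\mathcal{C}}(A,B)$ with $i_0 \ne i_{s_0}$. Condition (2) also supplies the canonical companion $(s_0,i_{s_0}) \in \textnormal{Hom}_{\mathcal{C}}(A,B)$, and these two distinct morphisms share the first component $s_0$. Pick $x_0 \in A$ with $i_0(x_0) \ne i_{s_0}(x_0)$; since $i_{s_0}(x_0) = \bigwedge s_0^{-1}(x_0) \sqsubseteq i_0(x_0) \in s_0^{-1}(x_0)$, the containment is strict in $B$. Take $r=2$ and, for every candidate witness $C$, define $\chi \colon \textnormal{Hom}_{\mathcal{C}}(A,C) \to \{0,1\}$ by setting $\chi(s',i')=1$ iff $i'=i_{s'}$.

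Suppose for contradiction that some $(t,j) \in \textnormal{Hom}_{\mathcal{C}}(B,C)$ makes $(t,j) \circ \textnormal{Hom}_{\mathcal{C}}(A,B)$ $\chi$-monochromatic, and focus on the two compositions $\alpha_1=(s_0 \circ t,\, j \circ i_0)$ and $\alpha_2=(s_0 \circ t,\, j \circ i_{s_0})$. Both share the first component $s_0 \circ t$, so if the common color is $1$ then $\alpha_1=\alpha_2=(s_0 \circ t,\, i_{s_0 \circ t})$, yielding $j \circ i_0 = j \circ i_{s_0}$ and, by injectivity of the embedding $j$, $i_0 = i_{s_0}$, which contradicts the choice made above.

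Ruling out the color $0$ case is the technical heart of the argument. Using that the embedding $i_t$ preserves meets one obtains the identity $i_{s \circ t} = i_t \circ i_s$ for every rigid surjection $s \colon B \to A$; hence the composition $(s \circ t,\, j \circ i_s)$—with the canonical $(s,i_s) \in \textnormal{Hom}_{\mathcal{C}}(A,B)$ from (2)—is canonical exactly when $j$ and $i_t$ coincide on $i_s(A) \subseteq B$. Color-$0$ monochromaticity therefore forces $j \ne i_t$ somewhere on every $i_s(A)$. The plan is to use the wealth of rigid surjections $B \to A$ from condition (2) to locate an $s$ whose image $i_s(A)$ lies inside the agreement locus $\{y \in B : j(y) = i_t(y)\}$, producing a canonical element in the composition and contradicting color $0$. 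The main obstacle is the delicate subcase in which $j$ disagrees with $i_t$ at every non-root vertex of $B$; closing this subcase—likely by refining $\chi$ to record the position of $i'(x_0)$ inside the preimage $(s')^{-1}(x_0)\subseteq C$, or by exploiting closure of $\mathcal{C}$ under composition to produce additional witnesses—is where the bulk of the technical work goes.
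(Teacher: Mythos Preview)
Your color-$1$ argument is fine, but the color-$0$ case is not closed, and you acknowledge this yourself. The difficulty you identify—the subcase where $j$ disagrees with $i_t$ at every non-root vertex of $B$—is real, and your proposed fixes are only hints. With your global-canonicity coloring $\chi(s',i')=1 \iff i'=i_{s'}$, ruling out color $0$ requires you to exhibit some rigid surjection $s\colon B\to A$ with $j\circ i_s = i_t\circ i_s$ on all of $A$, and there is no reason such an $s$ exists once $j\ne i_t$ at many points of $B$.

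The paper avoids this with two moves you are missing. First, it replaces your global coloring by the pointwise coloring $\chi(v,k)=0 \iff k(x_0)=i_v(x_0)$ at the single vertex $x_0$ you already fixed. Second—and this is the key structural lemma—it proves that if a vertex $y\in B$ has at least two immediate successors, then \emph{every} pair $(t,j)$ satisfying condition (1) is forced to have $j(y)=i_t(y)$: since $j$ preserves meets, $j(y)=j(y_1)\wedge j(y_2)$, and one checks this meet must lie $\sqsubseteq$-below both $i_t(y_1)$ and $i_t(y_2)$, hence below $i_t(y)$, while $i_t(y)\sqsubseteq j(y)$ always holds. The paper then precomposes $(s_0,i_0)$ and $(s_0,i_{s_0})$ with a suitable rigid surjection (an explicit construction, legitimate by condition (2)) so that, after renaming, $i_{s_0}(x_0)$ has at least two immediate successors in $B$; injectivity of the embedding preserves $i_0(x_0)\ne i_{s_0}(x_0)$. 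After this reduction, for \emph{any} $(t,j)\in\mathrm{Hom}_{\mathcal{C}}(B,C)$ the canonical composite automatically has $j(i_{s_0}(x_0))=i_t(i_{s_0}(x_0))=i_{s_0\circ t}(x_0)$, giving color $0$, while the non-canonical one satisfies
\[
i_{s_0\circ t}(x_0)=i_t(i_{s_0}(x_0)) <_C i_t(i_0(x_0)) \le_C j(i_0(x_0)),
\]
giving color $1$. No search over the rigid surjections $B\to A$ is needed.
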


However, we will show that there is a category of connections for trees that does have finite Ramsey degrees. In this section, we will extend the definition of connections for linear orders in a natural manner to trees. We will discuss the implications of this definition. Then we describe the Ramsey degrees for trees in this category of connections.
\begin{definition}
	If $S,T$ are trees then a pair $(s,i)\colon T\leftrightarrows S$ is a \textbf{connection} if the following are true,
	\begin{enumerate}
		\item[(a)] For all $x\in S$, $s(i(x))=x \text{ and for all }y<i(x), \ s(y)\leq x$.
		\item[(b)] $s$ is a rigid surjection.
		\item[(c)] $i$ is an embedding.
	\end{enumerate}
\end{definition}
Note that condition (a) of a connection  is equivalent to $(s,i)$ being a connection between the linear orders $(T,\leq_T)$ and $(S,\leq_S)$. In the case of linear orders, if $(s,i)$ is a connection, then $s$ is a rigid surjection and $i$ is an increasing injection. However, this is not the case for trees, which is why conditions (b) and (c) are needed.

We remark that this definition of connection meets conditions (1)-(3) above. Condition (1) is given and (2) follows from the definition of $i_s$. For condition (3) we need to show that there are connections which are not rigid surjections. We give examples of such connections when proving the lower bounds of the Ramsey degrees for trees.

We define a category of connections between trees as follows. Let $Conn_T$ be the category whose objects are (finite linearly ordered) trees and whose morhpisms are defined by  $\text{Hom}_{Conn_T}(S,T)=\{(s,i)\colon T\leftrightarrows S | (s,i)\text{ is a connection}\}$. Composition is given by $(s,i)\circ (t,j)=(t\circ s,i\circ j)$.

The main result that we will prove in this paper gives a categorization of the Ramsey degrees for connections between trees. 
\begin{theorem}\label{1.2}
	If $S\in \textnormal{Ob}(Conn_T)$ then $$rd(S)= |\mathcal{P}(\{x\in S\colon x\text{ is not the root of }S\text{ and }x\text{ has at most one immediate successor in }S\})|.$$
		Where $\mathcal{P}$ denotes the power set.
\end{theorem}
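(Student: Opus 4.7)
The plan is to split the proof into matching upper and lower bounds. Write $X = X(S)$ for the set of non-root vertices of $S$ with at most one immediate successor, so the claim is $rd(S) = |\mathcal{P}(X)| = 2^{|X|}$. The upper bound $rd(S) \leq 2^{|X|}$ is exactly Theorem \ref{2.18}, which I would prove as follows. Given $T$ and $r$, set $N = 2^{|X|}$ and define an auxiliary coloring $\tilde\chi$ of the rigid surjections $\sigma \colon V \to S$ by recording the $N$-tuple of $\chi$-colors of the connections $(\sigma, i)$ extending $\sigma$, indexed canonically by subsets of $X$. Since $\tilde\chi$ uses at most $r^N$ colors, the Dual Ramsey Theorem for trees (Theorem \ref{2.12}) produces $V$ and a rigid surjection $t \colon V \to T$ making $\{\sigma \circ t : \sigma \colon T \to S\}$ monochromatic for $\tilde\chi$; taking $j = i_t$ yields a connection $(t, j) \colon T \to V$ on which $\chi$ attains at most $N$ values on $(t, j) \circ \text{Hom}_{Conn_T}(S, T)$.

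The heart of this reduction is a local lemma parametrizing the embeddings $i$ for which $(\sigma, i)$ is a connection by subsets $Y \subseteq X$. At every root or branching vertex $x \in S \setminus X$, the meet-preservation of an embedding together with $\sigma \circ i = \text{Id}_S$ pins down $i(x)$ from the values of $i$ at the immediate successors of $x$. At each $x \in X$, by contrast, there are exactly two admissible choices for $i(x)$ within $\sigma^{-1}(x)$: the canonical value $i_\sigma(x) = \bigwedge \sigma^{-1}(x)$ and a second ``upper'' value determined by condition (a) of the connection definition together with the unique-successor or leaf geometry of $x$ in $S$. For the lower bound $rd(S) \geq 2^{|X|}$, I would pick $T$ rich enough that $\text{Hom}_{Conn_T}(S, T)$ realizes every subset $Y \in \mathcal{P}(X)$ as a choice pattern --- arranged by an explicit local ``doubling'' of $S$ at each vertex of $X$, which also supplies the connections with $i \neq i_s$ promised in condition (3) of the category. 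Coloring each connection by its choice pattern $Y$, the key verification is that post-composition preserves the pattern, i.e., the pattern of $(s \circ t, j \circ i)$ at $x \in X$ equals the pattern of $(s, i)$ at $x$, so every image $(t, j) \circ \text{Hom}_{Conn_T}(S, T)$ realizes all $2^{|X|}$ colors.

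The principal obstacle will be the local lemma: precisely describing the two admissible values of $i(x)$ at each $x \in X$ and showing that no more are admissible, then verifying that composition of connections acts coordinate-wise on choice patterns over $X$. Granted the lemma, the upper bound reduces to a single application of Solecki's Dual Ramsey Theorem for trees and the lower bound reduces to the explicit construction of $T$ together with the rigidity of choice patterns under composition.
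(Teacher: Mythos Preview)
Your lower bound is essentially the paper's approach: the doubling construction is Proposition~\ref{1.1}, and the invariance of the pattern $B=\{x\in X: i_s(x)\neq i(x)\}$ under post-composition is exactly what Lemma~\ref{2.16} and the proof of Lemma~\ref{2.17} establish. Note, however, that pattern-invariance is not automatic: it uses the specific feature of the doubled tree $T$ that $i_s(x)$ has at least two immediate successors for every $x$, so that Lemma~\ref{2.16} applies to force $j(i_s(x))=i_t(i_s(x))$.

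The upper bound, however, has a genuine gap. Your ``local lemma'' is false: for a fixed rigid surjection $\sigma\colon V\to S$ and a fixed $x\in X$, there are in general many admissible values of $i(x)$, not two. Already when $S$ is a two-element chain $\{r\sqsubset x\}$, any element of $\sigma^{-1}(x)$ can serve as $i(x)$, so the embeddings $i$ with $(\sigma,i)$ a connection are not parametrized by $\mathcal P(X)$. Consequently your coloring $\tilde\chi(\sigma)$, meant to record $\chi(\sigma,i_Y)$ for a canonical $i_Y$ indexed by $Y\subseteq X$, is not well defined; and even if you replace it by, say, the multiset of $\chi$-colors of all compatible embeddings grouped by pattern, the argument still fails. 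After the Dual Ramsey Theorem hands you $t\colon V\to T$, composing a connection $(s,i)$ gives $(s\circ t,\,i_t\circ i)$, and while $i_t\circ i$ has the same pattern $B$ as $(s,i)$, it is not the ``canonical'' embedding for $s\circ t$ with that pattern, nor is it determined by $s\circ t$ and $B$ alone. So knowing that $\tilde\chi(s\circ t)$ is constant tells you nothing about $\chi(s\circ t,\,i_t\circ i)$ specifically.

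The paper anticipates this: it remarks explicitly that Theorem~\ref{2.12} alone is not enough, and instead works with the finer Lemma~\ref{2.13} (the key lemma inside Solecki's proof of the Dual Ramsey Theorem for trees). The actual argument passes to the category $PSC$ of partial strong connections, then iterates a functor $\partial$ that peels off the top leaf $v$ of $S$ while recording the single bit ``$i(v)=i_s(v)$ or not''; each step of the iteration is handled via condition~(FP) and Lemma~\ref{2.13}. The point is that one must control the embedding coordinate inductively, one vertex at a time, rather than trying to freeze all of $i$ at once via a product coloring on rigid surjections.
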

 We prove this Theorem in two steps. In Section 4 we will prove Theorem \ref{4.1}. Using the same construction, we give a description of the lower bound of the Ramsey degrees.  
 \begin{lemma}\label{2.17}
 	If $S\in \textnormal{Ob}(Conn_T)$ then, $$rd(S)\geq |\mathcal{P}(\{x\in S\colon x\text{ is not the root of }S\text{ and }x\text{ has at most one immediate successor in }S\})|.$$
 
 \end{lemma}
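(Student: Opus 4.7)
The plan is to prove $rd(S)\geq 2^{|A|}$ with $A=\{x\in S: x\neq\textnormal{root}(S),\,|\textnormal{im}_S(x)|\leq 1\}$ by exhibiting a tree $B$ together with, for every tree $C$, a coloring of $\textnormal{Hom}_{Conn_T}(S,C)$ that attains at least $2^{|A|}$ colors on $f\circ\textnormal{Hom}_{Conn_T}(S,B)$ for every $f\in\textnormal{Hom}_{Conn_T}(B,C)$.

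I construct $B$ from $S$ by inserting, for each $x\in A$, a new vertex $x'$ placed between $x$ and the immediate successor of $x$ in $S$ (or as a fresh immediate leaf-child of $x$ when $x$ has no immediate successor in $S$). The map $s_\ast\colon B\to S$ that acts as the identity on $S$ and sends each $x'\mapsto x$ is a rigid surjection whose canonical embedding $i_{s_\ast}$ is the inclusion $S\hookrightarrow B$. For every $X\subseteq A$, define $i_X\colon S\to B$ by $i_X(x)=x'$ if $x\in X$ and $i_X(x)=x$ otherwise. Each $i_X$ is an embedding (order, meet, and root conditions follow from the placement $x\sqsubset x'\sqsubset y$ whenever $y$ is the immediate successor of $x$ in $S$), and the pair $(s_\ast,i_X)$ satisfies condition (a) of the definition of a connection. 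This yields $2^{|A|}$ distinct connections $(s_\ast,i_X)\in\textnormal{Hom}_{Conn_T}(S,B)$, which is the same configuration that drives the proof of Theorem~\ref{4.1}.

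For the coloring I would take $\chi_C\colon\textnormal{Hom}_{Conn_T}(S,C)\to\mathcal{P}(A)$ given by $\chi_C(r,e)=\{x\in A:e(x)\neq i_r(x)\}$; this has $2^{|A|}$ possible values. For $f=(s_C,i_C)\in\textnormal{Hom}_{Conn_T}(B,C)$, the composition $f\circ(s_\ast,i_X)$ equals $(s_\ast\circ s_C,\,i_C\circ i_X)$, and using $i_{s_\ast\circ s_C}=i_{s_C}\circ i_{s_\ast}=i_{s_C}|_S$ together with the disjointness of the fibers $s_C^{-1}(x)$ and $s_C^{-1}(x')$, one checks that $X\subseteq\chi_C(f\circ(s_\ast,i_X))$, with equality whenever $i_C$ agrees with $i_{s_C}$ on the subset $A\subseteq S\subseteq B$.

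The main obstacle is that a general $f\in\textnormal{Hom}_{Conn_T}(B,C)$ need not satisfy $i_C=i_{s_C}$ on $A$, in which case the above coloring collapses: writing $Y_f=\{x\in A: i_C(x)\neq i_{s_C}(x)\}$ one computes $\chi_C(f\circ(s_\ast,i_X))=X\cup Y_f$, which takes only $2^{|A|-|Y_f|}$ distinct values as $X$ ranges over $\mathcal{P}(A)$. To overcome this, I would either sharpen $\chi_C$ to a more robust invariant---for instance, a bit recording the $\leq_C$-extremal position of $e(x)$ inside the fiber $r^{-1}(x)$, chosen so as to distinguish the label $X\subseteq A$ regardless of any non-canonical choices in $f$---or enlarge $B$ by iterating the insertion (placing several auxiliary vertices $x^{(1)}\sqsubset\cdots\sqsubset x^{(N)}$ along each $x\in A$) and invoke a pigeonhole argument on the enlarged family $\{(s_\ast,i_X)\}$ to guarantee that at least $2^{|A|}$ of the compositions receive pairwise distinct colors under every $f$. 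Verifying that one of these refinements works uniformly in $C$ and $f$ is the delicate core of the argument; it is exactly the bookkeeping carried out in the construction underlying Theorem~\ref{4.1}.
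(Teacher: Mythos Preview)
Your overall strategy---build a single tree $B$ carrying $2^{|A|}$ connections $(s_\ast,i_X)$ and color by the set $\{x\in A:e(x)\neq i_r(x)\}$---is exactly the paper's approach. The gap is in your construction of $B$.

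By inserting a single vertex $x'$ above each $x\in A$, you leave $i_{s_\ast}(x)=x$ with only one immediate successor in $B$. As you correctly compute, this allows a composing connection $f=(s_C,i_C)$ to have $i_C(x)\neq i_{s_C}(x)$, producing a nonempty $Y_f$ and collapsing the coloring to $X\cup Y_f$. Your two proposed repairs do not close this gap: iterating the insertion (a chain $x\sqsubset x^{(1)}\sqsubset\cdots\sqsubset x^{(N)}$) still gives $x$ a unique immediate successor in $B$, so the same obstruction persists; and there is no evident ``more robust invariant'' that survives an arbitrary $f$, since $f$ can move $i_C(x)$ anywhere in the fiber above $i_{s_C}(x)$.

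The missing idea is to build $B$ so that $i_{s_\ast}(x)$ acquires \emph{at least two} immediate successors for every $x\in A$. Concretely (this is Proposition~\ref{1.1}), for each $a\in A$ add two new vertices $a_1,a_2$ as immediate successors of $a$, routing the old successor of $a$ through $a_1$; set $s_\ast(a_1)=s_\ast(a_2)=a$ and $i_X(a)=a_1$ for $a\in X$, $i_X(a)=a$ otherwise. Now $i_{s_\ast}(a)=a$ branches in $B$, so Lemma~\ref{2.16} forces $i_C(a)=i_{s_C}(a)$ for \emph{every} connection $f=(s_C,i_C)\colon C\leftrightarrows B$. This makes $Y_f=\emptyset$ automatically, and your computation then gives $\chi_C(f\circ(s_\ast,i_X))=X$ on the nose, yielding all $2^{|A|}$ colors for every $f$.
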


For the other direction of the proof of Theorem \ref{1.2} we  give an upper bound of the Ramsey degrees. 
\begin{theorem}\label{2.18}
	If $S$ is a tree, let $$A=\{x\in S\colon x\text{ is not the root of }S\text{ and }x\text{ has at most one immediate successor in }S\}.$$
	Then for all trees $T$ and any number $r>0$, there is a tree $V$ so that for any coloring 
	$\chi\colon \textnormal{Hom}_{Conn_T}(S,V)\to r$, there is a $(t,j)\in\textnormal{Hom}_{Conn_T}(T,V)$ such that for any connection 
	 $(s,i)\in\textnormal{Hom}_{Conn_T}(S,T)$, $\chi((t,j)\circ (s,i))$ depends only on the set, $$B=\{x\in A\colon i_s(x)\neq i(x)\}.$$
	In particular, $rd(S)\leq |P(A)|$.
\end{theorem}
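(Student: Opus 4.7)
The plan is to first pin down the structural constraint that forces the "deviation set" $B(s,i)$ to lie inside $A$, then to arrange the outer connection so that $B$ is preserved under composition, and finally to invoke the Dual Ramsey Theorem for trees (Theorem \ref{2.12}) via a suitable encoding of connections as rigid surjections into an enlarged tree.

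First, I would prove a preparatory lemma: for any connection $(s,i)$, $i(x) = i_s(x)$ for every $x \in S \setminus A$, so that $B(s,i) \subseteq A$ automatically. The root case is immediate. For $x$ with two immediate successors $x_1, x_2$, both $i$ and $i_s$ preserve meets, giving $i(x) = i(x_1) \wedge_T i(x_2)$ and $i_s(x) = i_s(x_1) \wedge_T i_s(x_2)$. Since $i_s(x) \sqsubseteq i(x) \sqsubseteq i(x_j)$ and $i_s(x) \sqsubseteq i_s(x_j) \sqsubseteq i(x_j)$, each $i_s(x_j)$ lies on the common chain from $i_s(x)$ to $i(x_j)$. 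A four-case analysis on whether each $i_s(x_j)$ sits in the lower segment $[i_s(x), i(x)]$ or the upper segment $[i(x), i(x_j)]$ shows that three of the cases collapse $i_s(x_1) \wedge i_s(x_2)$ to one of the $i_s(x_j)$, contradicting disjointness of $s^{-1}(x_1), s^{-1}(x_2)$ from $s^{-1}(x)$; the remaining case forces $i_s(x_1) \wedge i_s(x_2) = i(x)$, so $i(x) = i_s(x)$.

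Second, I would make a careful choice for the outer connection. For any rigid surjection $t \colon V \to T$, the pair $(t, i_t)$ is itself a connection: condition (a) follows from the Galois identity $i_t(t(y)) \sqsubseteq y$, handling separately the case $y \sqsubseteq i_t(x)$ (using $\sqsubseteq$-preservation and injectivity of $i_t$) and the incomparable case (tracking the branching at $y \wedge i_t(x)$ through $t$). Taking $(t,j) := (t, i_t)$, the identities
\[
(s \circ t)^{-1}(x) = \bigcup_{y \in s^{-1}(x)} t^{-1}(y), \qquad i_{s \circ t}(x) = i_t(i_s(x))
\]
(the second uses that $i_t$ is a meet-preserving embedding) together with injectivity of $i_t$ yield $B(s \circ t,\, i_t \circ i) = B(s, i)$. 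So it suffices to find a single rigid surjection $t \colon V \to T$ making $\chi(s \circ t,\, i_t \circ i)$ depend only on $B(s,i)$ as $(s,i)$ ranges over $\textnormal{Hom}_{Conn_T}(S, T)$.

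Third, I would encode the remaining choice data. A type-$B$ connection from $S$ to $T$ is specified by a rigid surjection $s \colon T \to S$ together with, for each $x \in B$, a choice $i(x) \in s^{-1}(x)$ lying strictly above $i_s(x)$ on the chain up to $i(x')$ (where $x'$ is the unique immediate successor of $x$, if any), compatible with condition (a) and the meet constraints. I would encode this data by attaching auxiliary marker vertices to $S$ to form an enlarged tree $\widehat{S}_B$ whose rigid surjections from $T$ stand in bijection with type-$B$ connections $S \to T$. Applying Theorem \ref{2.12} to $\widehat{S}_B$ and $T$ with $r$ colors yields a witness stabilizing the corresponding coloring. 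Running this in parallel for all $B \subseteq A$ (as a single application with a product coloring over $\mathcal{P}(A)$, taking the innermost Ramsey output and iterating) produces one tree $V$ and one rigid surjection $t \colon V \to T$ stabilizing every $B$-piece at once. With $(t, i_t)$ as the outer connection, $\chi((t, i_t) \circ (s, i))$ then depends only on $B(s, i)$, giving $rd(S) \le |\mathcal{P}(A)|$.

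The main obstacle I anticipate is the encoding step: constructing $\widehat{S}_B$ so that its rigid surjections from $T$ faithfully realize the type-$B$ connections, including condition (a) (a linear-order constraint, not intrinsically a tree constraint) and the meet-preserving requirement on $i$. This is the point where, as the introduction hints, the main lemma used inside the proof of Theorem \ref{2.12} in \cite{22} — rather than the theorem applied as a black box — is expected to do the real work.
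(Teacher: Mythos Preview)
Your first two steps are correct: the preparatory lemma is exactly Lemma~\ref{2.16}, and the invariance $B(s\circ t,\,i_t\circ i)=B(s,i)$ under composition with a pair of the form $(t,i_t)$ is precisely what drives Section~4 of the paper. From Step~3 on, however, your route diverges substantially from the paper's.

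The paper does not attempt a global per-$B$ encoding. Instead it passes to an auxiliary category $PSC$ of partial strong connections via a frank functor $\delta$ satisfying condition~(FP), and then peels off the $\leq_S$-maximal vertex $v$ of $S$ one step at a time using a functor $\partial\colon PSC_S\to PSC_{S^w*2}$ that records in a single bit whether $i(v)=i_s(v)$. Iterating $\partial$ down to the one-vertex tree and composing through Solecki's condition-(P) machinery (Propositions~\ref{2.4} and~\ref{2.5}) reduces the theorem to showing $\partial$ has~(FP); that final step is where Lemma~\ref{2.13} --- the inner lemma from~\cite{22}, not Theorem~\ref{2.12} itself --- is invoked, once for the bit~$0$ and, after enlarging $S$ by a single vertex, once for the bit~$1$. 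Your per-$B$ encoding is in fact viable with Theorem~\ref{2.12} used as a black box, contrary to your closing hedge: take $\widehat S_B$ to be $S$ with one vertex $b_1$ inserted immediately above each $b\in B$ (between $b$ and its successor, or as a new leaf if $b$ is a leaf), and decode a rigid surjection $u\colon T\to\widehat S_B$ as $(\pi\circ u,\,i')$ with $i'(b)=i_u(b_1)$ and $i'(x)=i_u(x)$ otherwise. This decoding is a natural \emph{surjection} onto type-$B$ connections --- not a bijection as you write, but surjectivity is all that is needed --- and condition~(a) for the decoded pair follows from $(u,i_u)$ being a connection together with the inequality $i'(b)\sqsubseteq i_s(x')$ at each successor $x'$, which is itself forced by condition~(a) at $b$. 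The nested iteration over $B\subseteq A$ then goes through exactly because of your Step~2. What your approach buys is a proof that sidesteps the $PSC$ category, the (P)/(FP) apparatus, and Lemma~\ref{2.13} entirely; what the paper's approach buys is a uniform one-vertex-at-a-time induction in which each step is small and the categorical framework absorbs the bookkeeping.
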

This theorem is a characterization of the Ramsey degrees $rd(S,T)$ for all $T$. Indeed consider the number $k$ given by, $$k=|\{B\in \mathcal{P}(A)\colon \exists (s,i)\in\text{Hom}_{Conn_T}(S,T)\text{ such that }B=\{x\in A
\colon i_s(x)\neq i(x))\}\}|$$
Then Theorem \ref{2.18} precisely states that $rd(S,T)\leq k$. By definition for any $T$ this $k\leq |\mathcal{P}(A)|$, since it is the cardinality of a subset of $\mathcal{P}(A)$. Hence we see that $rd(S)\leq|\mathcal{P}(A)|$. 

Note that Theorem 3.4 extends the Dual Ramsey Theorem for trees. For any coloring $\chi$ of the rigid surjections between $S$ and $T$, we define a new coloring on the connections by $\chi'(s,i)=\chi(s)$. Theorem 3.4 states that there is a $V$ and $(t,j)\in\textnormal{Hom}_{Conn_T}(T,V)$ such that for any connection 
$(s,i)\in\textnormal{Hom}_{Conn_T}(S,T)$, $\chi'((t,j)\circ (s,i))$ depends only on the set, $$B=\{x\in A\colon i_s(x)\neq i(x)\}.$$
Then for any rigid surjection between $S$ and $T$, $\chi(j\circ s)$ has the same color as $\chi'((t,j)\circ (s,i_s))$. This color is fixed since the set $B$ is empty for all pairs $(s,i_s)$.

In Section 5 we will show that Theorem \ref{2.18} implies the Ramsey Theorem for connections for linear orders. Thus these results do generalize the result for linear orders. Section 5 will also contain more background on general Ramsey Theory and ideas used in the proof of Theorem \ref {2.18}.
\section{Lower bounds on Ramsey Degrees}
In this section we prove Theorem \ref{4.1} which shows that no natural definition of connection has the Ramsey Property and Lemma \ref{2.17} which gives a lower bound on the Ramsey degree of connections. First, we  prove Lemma \ref{2.16} which shows that some maps are invariant under composition. In Proposition \ref{1.1} we construct a tree where this invariant occurs the maximal amount. Then we color maps using this invariant to giving a coloring with no monochromatic subsets. 

\begin{lemma}\label{2.16}
	If $(s,i)\colon T\leftrightarrows S$ is a pair so that $s$ is a rigid surjection, $i$ is an embedding, $s\circ i=\textnormal{Id}_S$, and $x\in S$ has at least two immediate successors then $i(x)=i_s(x)$.
\end{lemma}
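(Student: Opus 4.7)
The plan is to prove the two inclusions $i_s(x) \sqsubseteq i(x)$ and $i(x) \sqsubseteq i_s(x)$ separately, with the hypothesis on two immediate successors entering only in the second.

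For the easy direction, I would observe that $s\circ i = \textnormal{Id}_S$ gives $i(x)\in s^{-1}(x)$, and by definition $i_s(x) = \bigwedge s^{-1}(x)$, so $i_s(x)\sqsubseteq i(x)$ immediately.

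For the other direction, let $y_1, y_2$ be two distinct immediate successors of $x$ in $S$. Since both $i$ and $i_s$ are embeddings, they preserve meets, so $i(x) = i(y_1)\wedge i(y_2)$ and $i_s(x) = i_s(y_1)\wedge i_s(y_2)$. Moreover, as in step one applied to $y_1$ and $y_2$ in place of $x$, we have $i_s(y_j)\sqsubseteq i(y_j)$ for $j=1,2$. I would then prove the following key claim: $i(x)\sqsubseteq i_s(y_j)$ for $j=1,2$. Once this is established, taking meets gives $i(x)\sqsubseteq i_s(y_1)\wedge i_s(y_2) = i_s(x)$, which combined with the first direction finishes the proof.

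To prove the claim for $j=1$ (the other case is symmetric), I would argue by contradiction. Since $i_s(y_1)\sqsubseteq i(y_1)$ and $i(x)\sqsubseteq i(y_1)$, the elements $i_s(y_1)$ and $i(x)$ lie on the linearly ordered predecessor chain of $i(y_1)$, so they are comparable. Assume toward contradiction that $i_s(y_1)\sqsubsetneq i(x)$. Then $i_s(y_1)\sqsubsetneq i(x)\sqsubseteq i(y_2)$, so $i_s(y_1)$ and $i_s(y_2)$ are both predecessors of $i(y_2)$ and hence comparable. Either $i_s(y_1)\sqsubseteq i_s(y_2)$ or $i_s(y_2)\sqsubseteq i_s(y_1)$; in either case, $i_s(x) = i_s(y_1)\wedge i_s(y_2)$ collapses to one of $i_s(y_1)$ or $i_s(y_2)$. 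But $x$ is strictly below $y_j$ in $S$ and $i_s$ is an embedding, hence strictly order-preserving, so $i_s(x)\sqsubsetneq i_s(y_j)$ for each $j$, a contradiction.

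The main obstacle is step three: one has to spot the right contradictory configuration, namely that once $i_s(y_1)$ were strictly below $i(x)$, it would drop below the branching point of $i(y_1)$ and $i(y_2)$ in $T$, forcing $i_s(y_1)$ and $i_s(y_2)$ onto a single branch and collapsing $i_s(x)$ against the embedding property of $i_s$. Everything else is just careful use of the facts that both $i$ and $i_s$ preserve meets and that $i_s(x)$ is a lower bound for $s^{-1}(x)$.
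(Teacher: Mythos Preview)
Your proof is correct and follows essentially the same route as the paper's: both arguments pick two immediate successors of $x$, use that $i$ and $i_s$ preserve meets to write $i(x)$ and $i_s(x)$ as the meets of the respective images, observe $i_s(y_j)\sqsubseteq i(y_j)$, and then derive $i(x)\sqsubseteq i_s(y_j)$ by noting that the alternative would force $i_s(y_1)$ and $i_s(y_2)$ to be comparable, contradicting $i_s(x)\sqsubsetneq i_s(y_j)$. The only cosmetic difference is that the paper phrases the contradiction as ``$i_s(x_1)$ and $i_s(x_2)$ are incompatible'' up front, whereas you unpack that incompatibility inside the contradiction step.
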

Before the proof of Lemma \ref{2.16}, we give the following figure to provide an intuitive idea of how the lemma works.

Let $(s,i)\colon T\leftrightarrows S$, $s$ be a rigid surjection, $i$ an embedding, and $s\circ i=\text{Id}_S$. If $x\in S$ has two distinct immediate successors $x_1,x_2\in S$, then as seen below $i_s(x_1)\wedge i_s(x_2)=i(x_1)\wedge i(x_2)$. Since $i_s$ and $i$ are embeddings, $i_s(x)=i_s(x_1)\wedge i_s(x_2)$ and $i(x)=i(x_1)\wedge i(x_2)$. Hence $i_s(x)=i(x)$.

	\begin{center}
		$S=$\begin{tikzcd}[column sep=tiny, row sep=tiny]
			x_1\arrow[dr, dash] & & x_2\arrow[dl, dash]\\
			& x\arrow[d, dash] & \\
			& \vdots & \\
		\end{tikzcd} and $T=$\begin{tikzcd}[column sep=.00001em, row sep= my size]
			i(x_1)\arrow[dr, dash] & & & & & & & & i(x_2)\arrow [dl, dash]\\
			& \ddots \arrow[dr, dash] & & & & & & \iddots\arrow[dl, dash] & \\
			& & i_s(x_1)\arrow[dr, dash] & & & &  i_s(x_2)\arrow[dl, dash] & & \\
			& & & \ddots \arrow[dr, dash] & & \iddots \arrow[dl, dash] & & &\\
			& & & & i_s(x)\arrow[d, dash] & & & &\\
			& & & & \vdots & & & &\\
		\end{tikzcd}
	\end{center}

\begin{proof}[Proof of Lemma \ref{2.16}]
	Let  $(s,i)\colon T\leftrightarrows S$, $s$ be a rigid surjection, $i$ an embedding, $s\circ i=\text{Id}_S$, and  $x\in S$ have two distinct immediate successors $x_1$, $x_2$.  Since $s$ is a rigid surjection, $i_s(x_1)\wedge i_s(x_2)=i_s(x)$, similarly $i(x)=i(x_1)\wedge i(x_2)$. In particular, $i_s(x_1)\not\sqsubseteq i_s(x_2)$ and $i_s(x_2)\not\sqsubseteq i_s(x_1)$ (in which case we say that $i_s(x_1)$ and $i_s(x_2)$ are incompatible). Since $s\circ i=\text{Id}_S$ and $i_s$ is the $\sqsubseteq$ smallest embedding that is a partial inverse of $s$, we have that $i_s(x_1)\sqsubseteq i(x_1)$, $i_s(x_2)\sqsubseteq i(x_2)$, and $i_s(x)\sqsubseteq i(x)$. So, $i(x)$ and $i_s(x_1)$ are compatible since they are predecessors of $i(x_1)$, similarly $i(x)$ and $i_s(x_2)$ are compatible.  Note that $i(x)\sqsubseteq i_s(x_1)$ and $i(x)\sqsubseteq i_s(x_2)$, since otherwise $i_s(x_1)$ and $i_s(x_2)$ would be compatible. Hence, $i(x)\sqsubseteq i_s(x)$ since $i(x)$ is a predecessor of $i_s(x_1)$ and $i_s(x_2)$. As noted above, $i_s(x)\sqsubseteq i(x)$, which completes the proof.
\end{proof}
Lemma \ref{2.16} gives an invariant under composition. Indeed, consider a\ $(s,i)\colon T\rightleftarrows S$ with an $x\in S$ that has $i_s(x)=i(x)\in T$ with multiple immediate successors. Then by Lemma \ref{2.16} any composition with $(s,i)$, $(s\circ t,j\circ i)$ still has $i_{s\circ t}(x)=j\circ i(x)$. We will show in the proof of Theorem \ref{4.1} that $i_s(x)\neq i(x)$ is also an invariant under composition. Thus, to give a precise lower bound on the Ramsey degrees of connections we create a tree where these invariants occur as much as possible.
\begin{proposition}\label{1.1}
	For any tree $S$ let $$A=\{x\in S\colon x\text{ is not the root of }S\text{ and }x\text{ has at most one successor in }S\}.$$ 
	There is a tree $T$ and a rigid surjection $s\colon T\to S$ so that $i_s(x)$ has at least two successor for all $x\in S$ and for any $B\subseteq A$ there is an embedding $i_B\colon S\to T$ for which $(s,i_B)$ is a connection and for all $a\in A$, $i_s(a)\neq i_B(a)$ if and only if $a\in B$.
	\end{proposition}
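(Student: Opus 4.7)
The plan is to construct $T$ by ``inflating'' $S$: I put $S$ inside $T$ so that $i_s$ is the inclusion, and for every $x\in A$ I create at least one alternative $T$-vertex (a strict $T$-descendant of $x$) that also $s$-maps to $x$ and can serve as $i_B(x)$. Concretely, I take $T$ to be $S$ together with the following new vertices, and $s$ to be the identity on $S$. For each $x\in A$ with an immediate $S$-successor $x^+$, I subdivide the $T$-edge from $x$ to $x^+$ by inserting a new vertex $x^\star$ with $x\sqsubset_T x^\star\sqsubset_T x^+$ and $s(x^\star)=x$; I also hang a fresh dummy leaf $\delta_x$ off $x$ in $T$ with $s(\delta_x)=x$. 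For each $x\in A$ that is a leaf of $S$, I hang two dummy leaves $\delta_x^1,\delta_x^2$ off $x$ with $s(\delta_x^i)=x$. If the root of $S$ has fewer than two $S$-children, I hang additional dummies off it to raise its $T$-degree to at least two. I order the children of each $T$-vertex so that the $S$-children keep their $S$-order, with $\delta_x<_x x^\star$ when $x\in A$ has a successor, $\delta_x^1<_x\delta_x^2$ when $x\in A$ is a leaf, and any root-dummies rightmost.

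Next I verify that $s$ is a rigid surjection with $i_s$ equal to the inclusion $S\hookrightarrow T$. For each $x\in S$, every element of $s^{-1}(x)$ is a $T$-descendant of $x$, so $\bigwedge_T s^{-1}(x)=x$, giving $i_s(x)=x$. The Galois identities $s\circ i_s=\mathrm{Id}_S$ and $i_s\circ s\sqsubseteq_T\mathrm{Id}_T$ are immediate from the construction, and the inclusion $S\hookrightarrow T$ is clearly an embedding (the ancestry, meets, and child-orderings among $S$-vertices are preserved), so $s$ is indeed a rigid surjection. The required two-successor property for each $i_s(x)=x$ is built in: the two $T$-children are $\{\delta_x,x^\star\}$ for $x\in A$ with a successor, $\{\delta_x^1,\delta_x^2\}$ for $x\in A$ a leaf, the $S$-children (at least two) for non-root $x\notin A$, and the $S$-child (if any) plus added dummies for the root.

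Finally, for each $B\subseteq A$ I define $i_B\colon S\to T$ by $i_B(x)=x$ if $x\notin B$, $i_B(x)=x^\star$ if $x\in B$ has an $S$-successor, and $i_B(x)=\delta_x^1$ if $x\in B$ is a leaf of $S$. The map $i_B$ fixes the root (which is not in $A$) and sends each $x$ to a $T$-descendant of $x$; it preserves meets because every $S$-vertex at which two branches diverge has at least two $S$-children and so is not in $A$, hence is not subdivided in $T$. Thus $i_B$ is an embedding, $s(i_B(x))=x$ by construction, and $i_B(a)\neq a=i_s(a)$ if and only if $a\in B$. The only remaining point is condition (a) of a connection: for $y<_T i_B(x)$ with $x\in B$, either $y\leq_T x=i_s(x)$, in which case $s(y)\leq_S x$ follows from the general fact that $(s,i_s)$ itself is a connection whenever $s$ is a rigid surjection (this is an easy case analysis on $w=y\wedge_T i_s(x)$, using that $i_s$ preserves the linear order), or $y$ lies strictly between $x$ and $i_B(x)$ in $<_T$, and the only such vertices are $\delta_x$ (and, when $i_B(x)=x^\star$, at most $x^\star$ itself), all of which $s$-map to $x$. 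The delicate point throughout is choosing the child-orderings of $T$ so that the ``newly inserted'' vertices between $i_s(x)$ and $i_B(x)$ in $<_T$ all $s$-map into $\{y\in S:y\leq_S x\}$; placing $\delta_x$ to the left of $x^\star$, $\delta_x^1$ to the left of $\delta_x^2$, and root-dummies rightmost is the design choice that makes this work.
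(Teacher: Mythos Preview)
Your construction is correct and is essentially the paper's own construction: both inflate $S$ by giving each $a\in A$ two new immediate $T$-successors (your $\delta_a,a^\star$ or $\delta_a^1,\delta_a^2$ play the role of the paper's $a_1,a_2$), route the old $S$-branch through one of them, let $s$ collapse the new vertices back to $a$, and set $i_B(a)$ equal to the branch-carrying new child exactly when $a\in B$. The one substantive addition in your version is the explicit handling of the root via extra dummies when it has fewer than two $S$-children; the paper's construction omits this, so strictly speaking its $T$ need not satisfy ``$i_s(x)$ has at least two successors for all $x\in S$'' at the root, though this never matters in the applications.
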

\begin{proof}
	Fix a tree $S$ and let $$A=\{x\in S\colon x\text{ is not the root of }S\text{ and }x\text{ has at most one successor in }S\}.$$
	We construct a tree $T$ with base set $S\sqcup A\sqcup A$, that is for each $a\in A$ we add two additional vertices $a_1$ and $a_2$. We give the tree relation $\sqsubseteq_T$ which extends $\sqsubseteq_S$ on $S$. Define this relation to be reflexive. If $x\in S$, let $x\sqsubseteq_T  a_1$ and $x\sqsubseteq_T a_2$ if and only if $x\sqsubseteq_S a$. For $y\in S$, let $a_1\sqsubseteq_T y$ if and only if $y\neq a$ and $a\sqsubseteq_S y$. Lastly, for all $b\in A$, $b_1\sqsubseteq_T a_1$ and $ b_1\sqsubseteq_T a_2$ if and only if $a\sqsubseteq_S b$. So each $a\in A$ has two new immediate successors $a_1$ and $a_2$. Furthermore, the immediate successor of $a$ in $S$ is an immediate successor of $a_1$ in $T$ if it exists. Define $\leq_T$ by $a_1\leq_T a_2$ for all $a\in A$ and extend $\leq_S$ otherwise. This is sufficient since all $a\in A$ have at most one successor. 
	The following is an example of a $T$ constructed this way from a tree $S$.
		\begin{center}
			If $S=$\begin{tikzcd}[column sep=tiny, row sep=tiny]
				a\arrow[dr, dash] & & & & \\
				& b\arrow[dr, dash]& & c\arrow[dl, dash]& \\
				& & \bullet\arrow[d, dash] & & \\
				& & d \arrow[d, dash]& & \\
				& & \bullet & &\\
			\end{tikzcd} then  $T=$\begin{tikzcd}[column sep=tiny, row sep=tiny]
				a_1\arrow[dr, dash] & & a_2\arrow[dl, dash] & & & & &\\
				& a\arrow[dr, dash] & & & & & & &\\
				& & b_1\arrow[dr, dash] & & b_2\arrow[dl, dash] & c_1\arrow[d, dash] & c_2\arrow[dl, dash]\\
				& & & b\arrow[dr, dash]& & c\arrow[dl, dash]& \\
				& & & & \bullet\arrow[d, dash] & &  \\
				& & & & d_1\arrow[d, dash] & d_2\arrow[dl, dash]\\
				& & & & d \arrow[d, dash]& & \\
				& & & & \bullet & &\\
			\end{tikzcd}
		\end{center}
	In effort to construct connections between $T$ and $S$ we claim that any map $i\colon S\to T$ which extends the identity on $S\backslash A$ and has $i(a)=a$ or $i(a)=a_1$ for all $a\in A$ is an embedding. Fix such a function $i$. Since the root of $S$ is not in $A$ and  by the definition of $\leq_T$, it is enough to show that $i$ preserves $\wedge$. Suppose that $x,y,z\in S$ and $x\wedge y=z$. We can assume that $z$ is neither $x$ or $y$, since if not $i(z)=i(x)\wedge i(y)$ since $i$ preserves $\sqsubseteq_S$.  Hence $z$ has at least two immediate successors so $z\in S\backslash A$. By the definition of $\sqsubseteq_T$, for any $s\neq x\in S$, $s\sqsubseteq_T i(x)$ if and only if $s\sqsubseteq_S x$ and the same holds for $s\neq y$. So $z$ is the largest common predecessor of $i(x)$ and $i(y)$ in $S$ (seen as a subset of $T$). Since no element of $T\backslash S$ has at least two immediate successors, this means that $z=i(x)\wedge_T i(y)$. Since $z$ is not in $A$ our assumption on $i$ means that $i(z)=z$. Hence the above says that $i(z)=i(x)\wedge_T i(y)$ which proves the claim.
	
 Using the above claim we complete the proof. Let $s\colon T\to S$ be defined by $s(x)=x$ on $S$ and $s(a_1)=s(a_2)=a$. Then $i_s\colon S\to T$ is the identity on $S$ which is an embedding by the claim above. For each $B\subseteq A$, let $i_B\colon S\to T$ be defined by $i(x)=x$ for all $x\in S\backslash B$, and $i(b)=b_1$ for all $b\in B$. By the claim above each $i_B$ is an embedding, also it is easy to check that $(s,i_B)$ is a connection. 
\end{proof}
The failure of the Ramsey property follows rather directly from Lemma \ref{2.16} and Proposition \ref{1.1}. We start by proving Theorem \ref{4.1}. 
\begin{proof}[Proof of Theorem \ref{4.1}]
	Let $\mathcal{C}$ be a category whose objects are all trees and whose morphisms are pairs of maps $(s,i)$ between trees that satisfies conditions (1)-(3). By condition (3) there are trees $S$, $T$ and $(s,i)\in \text{Hom}_{\mathcal{C}}(S,T)$ so that $i\neq i_s$. Fix $x\in S$ so that $i_s(x)\neq i(x)$. We can assume that $i_s(x)$ has at least two immediate successors in $T$. Indeed, if not apply Proposition \ref{1.1} to $T$ to obtain a new tree $T'$. There is a rigid surjection $t\colon T'\to T$ so that $i_t(i_s(x))$ has at least two immediate successors.  Note that $i_t\circ i_s(x)\neq i_t\circ i(x)$ since $i_t$ is an injection. By condition (2), $(t,i_t)\in \text{Hom}_{\mathcal{C}}(T,T')$, so by composing with $(t,i_t)$ we assume that $i_s(x)$ has at least two immediate successors. Given this assumption we show that for any $V\in\text{Ob}(\mathcal{C})$, $V\not\to (T)^{S}_{2}$ in $\mathcal{C}$.
	
	Fix a tree $V$ and define a coloring $\chi\colon\text{Hom}_{\mathcal{C}}(S,V)\to 2$ by $$\chi(v,k)=\left\{ \begin{array}{ll}
		0 & \text{if }i_v(x)=k(x)\\
		1 & \text{if }i_v(x)\neq k(x)
		\end{array}
	 \right.$$
	   It is enough to show that for any connection $(u,j)\in\text{Hom}_{\mathcal{C}}(T',V)$,  $\chi((u,j)\circ (s,i_s))=0$  and $\chi((u,j)\circ (s,i))=1$. By Lemma \ref{2.16} we get that $i_u(i_s(x))=j(i_s(x))$, hence \\$\chi((u,j)\circ (s,i_s))=0$. It remains to show that $i_u(i_s(x)\neq j(i(x))$. To do so we prove that $$i_u(i_s(x))<_V i_u(i(x))\leq_V j(i(x))$$
	 For the first part of the inequality we note that $i_s(x)\sqsubseteq_T i(x)$ by the definition of $i_s$. This implies $i_s(x)\leq_T i(x)$ and $i_s(x)<i(x)$ since $i_s(x)\neq i(x)$. Because $i_u$ is an increasing injection, $i_u(i_s(x))<_V i_u(i(x))$. The last inequality follows from the definition of $i_u$ since for any $y\in T$, $i_u(y)\sqsubseteq_V j(y)$ (hence $i_u(y)\leq_T j(y))$.
\end{proof}
We finish this section proving Lemma 3.3 which gives a lower bound on the Ramsey degree for connections.
\begin{proof}[Proof of Lemma \ref{2.17}]
		Fix a tree $S$ and let $$A=\{x\in S\colon x\text{ is not the root of }S\text{ and }x\text{ has at most one successor in }S\}$$ We apply Proposition \ref{1.1} to $S$ to obtain a tree $T$. For an arbitrary tree $V$, we define a coloring $\chi\colon\text{Hom}_{Conn_T}(S,V)\to \mathcal{P}(A)$ by $$\chi(v,k)=\{a\in A\colon i_v(A)\neq k(A)\}$$ It is enough to show that for any $(t,j)\in \text{Hom}_{Conn_T}(T,V)$ and $B\subseteq A$,\\ $\chi((t,j)\circ (s,i_B))=B$. Here $(s,i_B)$ is the connection given by Proposition 4.2 where for all $a\in A$, $i_s(a)\neq i_B(a)$ if and only if $a\in B$.
		
		 Fix $(t,j)\in\text{Hom}_{Conn_T}(T,V)$ and $B\subseteq A$. We show $B\subseteq \chi(t,j)\circ (s,i_B)$, for this end let $b\in B$. Note that by Proposition 4.2, $i_s(b)\neq i_B(b)$, hence $i_s(b)<_T i_B(b)$ since $i_s$ is the smallest partial inverse of $s$. Because $j$ is an embedding, we have that $j(i_s(b))<_V j(i_B(b))$. By the definition of $i_t$ we have $i_t(i_s(b))\leq_V j(i_s(b))$. Thus $i_t(i_s(b))<_V j(i_B(i))$. This means that $b\in \chi((t,j)\circ (s,i_B))$, since we have proven that $i_t(i_s(b))\neq j(i_B(i))$.  Thus we obtain the first inclusion that $B\subseteq \chi((t,j)\circ (s,i_B))$.
		 
		  For the other inclusion we will prove the contrapositive. So we need to prove that if $a\in A\backslash B$, then $j\circ i_B(a)=i_t\circ i_s(a)$. By Proposition 4.2 $i_s(a)=i_B(a)$ and $i_s(a)$ has at least two successors in $T$. In this case Lemma \ref{2.16} states that $j(i_s(a))=i_t(i_s(a))$. Hence, $i_t\circ i_s(a)=j\circ i_{B}(a)$, which completes the proof.
\end{proof}
 In the next section we will prove that the lower bound given in Lemma \ref{2.17} is in fact an upper bound. 
\section{Upper bounds on Ramsey Degrees}
In this section we focus on Theorem \ref{2.18}. The goal of the first part of this section is to prove Theorem \ref{2.18}. To do so we start by giving some background. First, we discuss a general framework create by Solecki in \cite{23} which allows us to prove a simpler statement instead of proving the entire Ramsey property. We then give details about the proof of Theorem \ref{2.13} which we will use in our proof of Theorem \ref{2.18}. In doing so we expand the notion of sealed rigid surjection in \cite{22} for connections. Then we prove of Theorem \ref{2.18} by breaking it into multiple steps. We continue to reduce the problem until it follows from the main lemma used to prove Theorem 2.4 in \cite{22}. In Section 5.4 we show that the Self-Dual Ramsey Theorem for linear orders follows from Theorem \ref{2.18}
	\subsection{Transferring the Ramsey Property and Condition (P)}
	We review the work of Solecki in \cite{23} that we will use to prove Theorem \ref{2.18}. The key definition of \cite{23} is that of condition (P). This concept is a version of the Ramsey property that is defined for functors rather than categories. Fulfilling condition (P) can be propagated by so called frank functors. We will define condition (P), frank functors, and then we state facts from \cite{23} which explain how frank functors transfer fulfilling condition (P). We also describe a weaker condition called (FP) and state when fulfilling (FP) implies fulfilling condition (P).
	\begin{definition}
		Given categories $\mathcal{C},\mathcal{D}$, a functor $\delta\colon \mathcal{C}\to \mathcal{D}$, and $A,B\in \textnormal{Ob}(\mathcal{C})$. If \\$f'\in \delta(\textnormal{Hom}_{\mathcal{C}}(A,B))$, let $$\textnormal{Hom}_{C}^{f'}(A,B)=\{f\in \textnormal{Hom}_{\mathcal{C}}(A,B)\colon \delta(f)=f'\}.$$
		We say that $\delta$ \textbf{fulfills (P) at $A,B$}, if for all $r>0$ there is a $C\in\textnormal{Ob}(\mathcal{C})$, so that for each $r$-coloring $\chi\colon \textnormal{Hom}_{\mathcal{C}}(A,B)\to r$, there is a $g\in \textnormal{Hom}_{\mathcal{C}}(B,C)$ so that for all morhpisms $f'\in \delta(\textnormal{Hom}_{\mathcal{C}}(A,B))$, $g\circ \textnormal{Hom}_{C}^{f'}(A,B)$ is $\chi$-monochromatic.
		
		We say that $\delta$ \textbf{fulfills (P) at $A$} if $\delta$ fulfills (P) at $A,B$ for all $B\in\textnormal{Ob}(\mathcal{C})$.
	\end{definition}
	 We can view condition (P) as a generalization of the Ramsey property. Indeed, having the Ramsey property is a special case of fulfilling condition (P). Let $\mathcal{C}$ be a category and $\gamma\colon \mathcal{C}\to \{*\}$ is the unique functor from $\mathcal{C}$ to the category with one object. In this case $\gamma$ fulfills (P) at $A,B\in\text{Ob}(\mathcal{C})$ if and only if for all $r>0$ there is a $C\in \text{Ob}(\mathcal{C})$ so that $C\to(B)^{A}_{r}$.  This follows from  there only being one $f'\in\text{Hom}(*)$, hence $\text{Hom}_{\mathcal{C}}^{f'}(A,B)=\text{Hom}_{\mathcal{C}}(A,B)$. 
	
	Condition (P) gives an upper bound on Ramsey Degrees. Note that if $\delta\colon \mathcal{C}\to \mathcal{D}$ and $\delta$ satisfies (P) at $A,B\in \text{Ob}(\mathcal{C})$, then $rd(A,B)\leq |\delta(\text{Hom}_{\mathcal{C}}(A,B))|$. Next we will consider certain functors which transfer condition (P). 
	\begin{definition}
		Let $\mathcal{C},\mathcal{D}$ be categories and $A\in \text{Ob}(\mathcal{C})$, a functor $\delta\colon \mathcal{C}\to \mathcal{D}$ is called \textbf{frank at $A$} if for all  $D\in\textnormal{Ob}(\mathcal{D})$ there is a $B\in\textnormal{Ob}(\mathcal{C})$ so that $\delta(B)=D$ and $$\delta(\textnormal{Hom}_{\mathcal{C}}(A,B))=\textnormal{Hom}_{\mathcal{D}}(\delta(A),D).$$
		\end{definition}
	Note that by the definition of functor $\delta(\textnormal{Hom}_{\mathcal{C}}(A,B))\subseteq\textnormal{Hom}_{\mathcal{D}}(\delta(A),D)$
	so frank functors are the functors where $\delta(\textnormal{Hom}_{\mathcal{C}}(A,B))\supseteq\textnormal{Hom}_{\mathcal{D}}(\delta(A),D).$ 
	The following result explains how frank functors interact with condition (P).
	\begin{proposition}\label{2.4} Let $\mathcal{C},\mathcal{D},\mathcal{E}$ be categories $\delta\colon \mathcal{C}\to \mathcal{D}$ and $\gamma\colon \mathcal{D}\to \mathcal{E}$ be functors. For all $A,B\in\textnormal{Ob}(\mathcal{C})$ if $\delta$ fulfills (P) at $A$, $\delta$ is frank at $B$, and $\gamma$ fulfills (P) at $\delta(A),\delta(B)$ then $\delta\circ \gamma$ fulfills (P) at $A,B$.
	\end{proposition}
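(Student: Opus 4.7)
The plan is to construct the required witness object $C\in\textnormal{Ob}(\mathcal{C})$ by chaining the three hypotheses in the order: first (P) for $\gamma$, then frankness of $\delta$, and finally (P) for $\delta$. Fix $r>0$. First, I would apply (P) for $\gamma$ at $\delta(A),\delta(B)$ with $r$ colors to obtain $E\in\textnormal{Ob}(\mathcal{D})$. Next, apply frankness of $\delta$ at $B$ to the object $E$ to obtain $B'\in\textnormal{Ob}(\mathcal{C})$ with $\delta(B')=E$ and $\delta(\textnormal{Hom}_{\mathcal{C}}(B,B'))=\textnormal{Hom}_{\mathcal{D}}(\delta(B),E)$. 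Finally, apply (P) for $\delta$ at the pair $A,B'$ with $r$ colors to obtain $C\in\textnormal{Ob}(\mathcal{C})$. I claim this $C$ witnesses that the composition functor fulfills (P) at $A,B$.

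To verify the claim, fix a coloring $\chi\colon\textnormal{Hom}_{\mathcal{C}}(A,C)\to r$. The choice of $C$ produces $h\in\textnormal{Hom}_{\mathcal{C}}(B',C)$ such that, for every $f'\in\delta(\textnormal{Hom}_{\mathcal{C}}(A,B'))$, the set $h\circ\textnormal{Hom}_{C}^{f'}(A,B')$ is $\chi$-monochromatic of some color $c(f')\in r$. Extend $c$ arbitrarily to a coloring $\chi'\colon\textnormal{Hom}_{\mathcal{D}}(\delta(A),E)\to r$. The choice of $E$ then produces $g_1\in\textnormal{Hom}_{\mathcal{D}}(\delta(B),E)$ such that, for every $f''\in\gamma(\textnormal{Hom}_{\mathcal{D}}(\delta(A),\delta(B)))$, the set $g_1\circ\textnormal{Hom}_{E}^{f''}(\delta(A),\delta(B))$ is $\chi'$-monochromatic. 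Using frankness, lift $g_1$ to $g_2\in\textnormal{Hom}_{\mathcal{C}}(B,B')$ with $\delta(g_2)=g_1$, and set $g:=h\circ g_2\in\textnormal{Hom}_{\mathcal{C}}(B,C)$.

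The rest is a diagram chase: for any $f''$ in the image of the composed functor and any $f\in\textnormal{Hom}_{C}^{f''}(A,B)$ (with the fibre taken with respect to $\gamma\circ\delta$), one computes
$$\chi(g\circ f)=\chi(h\circ(g_2\circ f))=c(\delta(g_2\circ f))=c(g_1\circ\delta(f))=\chi'(g_1\circ\delta(f)),$$
and this last value depends only on $f''$, since $\gamma(\delta(f))=f''$ places $\delta(f)$ in $\textnormal{Hom}_{E}^{f''}(\delta(A),\delta(B))$, on which $g_1$ acts $\chi'$-monochromatically. The main obstacle is the bookkeeping in this last calculation, specifically ensuring that the intermediate coloring $c$, which is initially defined only on the subset $\delta(\textnormal{Hom}_{\mathcal{C}}(A,B'))$ of $\textnormal{Hom}_{\mathcal{D}}(\delta(A),E)$, can be treated as an honest $r$-coloring on the whole hom-set, and that the $\mathcal{D}$-morphism $g_1$ admits a $\mathcal{C}$-lift through $\delta$. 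The first point is handled by an arbitrary extension (the values of $\chi'$ outside the image of $\delta$ are irrelevant because the fibres we care about lie in that image). The second is exactly the role of frankness of $\delta$ at $B$, which is what forces the order of the three steps: the object $E$ must be chosen \emph{before} the object $B'$, and $B'$ must be chosen \emph{before} $C$.
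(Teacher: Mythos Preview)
Your proof is correct and is precisely the standard argument. The paper does not give an in-text proof of this proposition; it simply remarks that it is a version of \cite[Theorem~3.1]{23} and follows directly from that proof with the weakened hypothesis that $\delta$ need only be frank at $B$. Your write-up is exactly the argument one finds there: obtain the $\mathcal{D}$-witness $E$ from $\gamma$, lift it via frankness to a $\mathcal{C}$-object $B'$, obtain the $\mathcal{C}$-witness $C$ from $\delta$ at $(A,B')$, and then chase the induced coloring through the two levels. One tiny notational slip: in the fibre notation $\textnormal{Hom}_{E}^{f''}(\delta(A),\delta(B))$ the subscript should be the category $\mathcal{D}$ rather than the object $E$, but this does not affect the argument.
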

	This is a version of \cite[Theorem 3.1]{23}, which follows directly from the proof of \cite[Theorem 3.1]{23}, but has weaker hypothesis since we only assume that the functor is frank at $B$ instead of assuming it is frank at every object.
	
	We remark that if $\delta\colon \mathcal{C}\to \mathcal{D}$ has (P) at $A\in \text{Ob}(\mathcal{C})$ and is frank at $B\in \mathcal{C}$ then Proposition \ref{2.4} implies that for each $r>0$, there is a $C\in \text{Ob}(\mathcal{C})$ so that $C\to (B)^{A}_{r}$ in $\mathcal{C}$. Indeed, as noted above since $\mathcal{D}$ has the Ramsey property, the unique functor $\gamma\colon \mathcal{D}\to \{*\}$ has (P) at $\delta(A),\delta(B)$. By Proposition \label{2.4} $\gamma\circ \delta$ has (P) at $A,B$. However, $\gamma\circ \delta$ is the unique functor from $\mathcal{C}$ to $\{*\}$, hence $\gamma\circ \delta$ having (P) at $A,B$ means there is a $C\in \text{Ob}(\mathcal{C})$ so that $C\to (B)^{A}_{r}$ in $\mathcal{C}$.
	
		The other theorem from \cite{23} that we will be using involves a local version of condition (P) which is called condition (FP).
	\begin{definition}
		Fix categories $\mathcal{C},\mathcal{D}$, a functor $\delta\colon \mathcal{C}\to \mathcal{D}$, and $A,B\in \textnormal{Ob}(\mathcal{C})$.	We say that $\delta$ \textbf{fulfills (FP) at $A,B$} if for all $r>0$ and finite set $e\subseteq \delta(\textnormal{Hom}_{\mathcal{C}}(A,B))$ there are $C\in\textnormal{Ob}(\mathcal{C})$, $f'\in s$, and $g'\in \delta(\textnormal{Hom}(B,C))$, so that for each $r$-coloring $\chi\colon \textnormal{Hom}_{\mathcal{C}}(A,B)\to r$, there is a $g\in \textnormal{Hom}_{\mathcal{C}}(B,C)$ so that $g\circ \textnormal{Hom}_{C}^{f'}(A,B)$ is $\chi$-monochromatic and $g'\circ e=\delta(g)\circ e$ for all $e\in s$.
		
		We say that $\delta$ \textbf{fulfills (FP) at $A$} if $\delta$ fulfills (FP) at $A,B$ for all $B\in\textnormal{Ob}(\mathcal{C})$.
	\end{definition}
	In condition (P) we require the set $g\circ \textnormal{Hom}_{C}^{f'}(A,B)$ to be $\chi$-monochromatic for all $f'\in \delta(\textnormal{Hom}_{\mathcal{C}}(A,B)$ while in (FP) we fixed the $f'$ ahead of time. In exchange for this, condition (FP) requires us to restrict $\delta(g)$. The following proposition shows that for our purposes condition (FP) implies condition (P).
	\begin{proposition}[{\cite[Theorem 4.3]{23}}]\label{2.5}
		Let $\delta\colon \mathcal{C}\to \mathcal{D}$ be a functor, if $\delta$ fulfills (FP) at $A\in \textnormal{Ob}(\mathcal{C})$ then $\delta$ fulfills (P) at $A,B$ for each $B\in \textnormal{Ob}(\mathcal{C})$ with $\delta(\textnormal{Hom}_{\mathcal{C}}(A,B))$ finite.
	\end{proposition}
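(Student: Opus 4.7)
The plan is to prove Proposition \ref{2.5} by induction on $n := |\delta(\textnormal{Hom}_{\mathcal{C}}(A,B))|$. The case $n = 0$ is immediate since there is nothing to color. For the inductive step, I would fix $r > 0$, enumerate $F := \delta(\textnormal{Hom}_{\mathcal{C}}(A,B)) = \{f'_1,\ldots,f'_n\}$, and construct a tower $B = B_0, B_1, \ldots, B_n$ in $\mathcal{C}$ with morphisms $h_k \colon B_{k-1} \to B_k$ by invoking (FP) at each step. The witness for (P) at $A, B$ will be $C := B_n$, together with the composite $h := h_n \circ \cdots \circ h_1 \colon B \to C$.

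The critical mechanism is condition (ii) in the definition of (FP). At step $k$ I would feed (FP) the finite set $s_k \subseteq \delta(\textnormal{Hom}_{\mathcal{C}}(A,B_{k-1}))$ obtained by pushing the not-yet-handled part of $F$ forward through the previously constructed maps. The chosen $g'_k \in \delta(\textnormal{Hom}_{\mathcal{C}}(B_{k-1},B_k))$ then pins down $\delta(h_k) \circ e$ for every $e \in s_k$, independently of the coloring. This means composition with $h_k$ preserves the partition of $\textnormal{Hom}_{\mathcal{C}}(A, B_{k-1})$ into $\delta$-fibers: if $f$ has $\delta(f) = f'$, then $h_k \circ f$ has the fixed $\delta$-value $g'_k \circ f'$. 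Hence any monochromaticity obtained at earlier steps is maintained under composition with the later $h_j$'s, and because (FP) existentially produces some $f' \in s_k$ and makes the corresponding fiber monochromatic, iterating this $n$ times exhausts $F$ and yields $h$ such that $h \circ \textnormal{Hom}_C^{f'_k}(A,B)$ is $\chi$-monochromatic for every $k$.

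The main obstacle is the bookkeeping of color counts. Since the original coloring $\chi$ lives on $\textnormal{Hom}_{\mathcal{C}}(A, C)$ and (FP) is applied at intermediate levels $B_{k-1}$, I must choose the number of colors $r_k$ fed into (FP) at step $k$ so that, after pulling back $\chi$ through the later maps $h_n, \ldots, h_{k+1}$, the induced coloring on $\textnormal{Hom}_{\mathcal{C}}(A, B_k)$ is an $r_k$-coloring. This forces $r_k$ to depend on $r$ and on the number of distinct monochromatic colors already fixed at stages $k+1, \ldots, n$ — a standard iterated Ramsey-style estimate — and the construction must be carried out top-down (choosing $r_n, r_{n-1}, \ldots, r_1$ in that order) so that all the parameters are consistent.
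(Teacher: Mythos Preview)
The paper does not supply a proof of this proposition; it is quoted verbatim from \cite[Theorem~4.3]{23} and used as a black box. So there is nothing in the present paper to compare your argument against.

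That said, your overall plan is the standard one and is correct: build the tower $B=B_0,\dots,B_n$ bottom-up by repeated applications of (FP), record at each stage the selected $f'_{(k)}\in s_k$ and the element $g'_k\in\delta(\mathrm{Hom}(B_{k-1},B_k))$, and then, given $\chi$ on $\mathrm{Hom}_{\mathcal C}(A,B_n)$, produce $h_n,h_{n-1},\dots,h_1$ top-down and set $h=h_n\circ\cdots\circ h_1$. The control clause $g'_k\circ e=\delta(h_k)\circ e$ for $e\in s_k$ is exactly what guarantees that $h_{k-1}\circ\cdots\circ h_1$ carries $\mathrm{Hom}^{f'}_{\mathcal C}(A,B)$ into the fiber $\mathrm{Hom}^{f'_{(k)}}_{\mathcal C}(A,B_{k-1})$ whenever $f'$ has not yet been selected, so that the monochromaticity produced by $h_k$ applies to it.

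Your final paragraph, however, identifies a nonexistent obstacle. There is no color-count bookkeeping to do: once $h_{k+1},\dots,h_n$ are fixed, the induced coloring $\chi_k(f)=\chi(h_n\circ\cdots\circ h_{k+1}\circ f)$ on $\mathrm{Hom}_{\mathcal C}(A,B_k)$ is still an $r$-coloring, so one may simply take $r_k=r$ at every stage. No iterated Ramsey-style blow-up of the color parameter is required; the only genuine care is the fiber-tracking you already described in your second paragraph.
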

Due to Proposition 5.2, in this paper we will prove condition (FP) instead of proving condition (P).
\subsection{Strong Connections and Forests}
To prove Theorem \ref{2.18} we will need more background from the proof of Theorem \ref{2.12} in \cite{22}. We start by recapping the definition of sealed rigid surjections from \cite{22}, which we generalize to connections. This allows us to define a new category that we transfer the Ramsey property from. We end the section by stating a version of the key lemma used to prove Theorem \ref{2.12} in \cite{22} that we use in the proof of Theorem 3.4.

\begin{definition}
	A \textbf{leaf} of a tree $T$ is a $\sqsubseteq$ maximal element of $T$.
	
	We say that a rigid surjection $s\colon T\to S$ is a \textbf{sealed rigid surjcetion} if $i_s$ sends the $\leq_S$ largest leaf of $S$ to the $\leq_T$ largest leaf of $T$
\end{definition}

We can turn any rigid surjection into a sealed rigid by restricting its domain. We add new notation for such restrictions. If $T$ is a tree and $v\in T$, we let $T^v=\{y\in T\colon y\leq_T v\}$ we call such subtrees \textbf{initial subtrees} of $T$. For a rigid surjection $s\colon T\to S$ we let $s^v=s\restriction T^v$. If $s\colon T\to S$ is a rigid surjection and $v$ is the largest leaf of $S$, $s^{i_s(v)}$ is a sealed rigid surjection. Now we define an analog to sealed rigid surjections for connections.
\begin{definition}
	A connection $(s,i)\colon T\leftrightarrows S$ between trees $T$ and $S$ is \textbf{strong} if $i$ sends the largest $\leq_S$ leaf of $S$ to the largest $\leq_T$ leaf of $T$.
\end{definition}
Note that $s$ is a sealed rigid surjection if and only if $(s,i_s)$ is a strong connection. We will define a category of partial strong connections as follows. 

Let $PSC$ have objects be fine ordered trees and $$\textnormal{Hom}_{PSC}(S,T)=\{(s,i)\colon T^{v}\leftrightarrows S| \ v\in T \text{ and }(s,i)\text{ is a strong connection}\}.$$
Composition is defined by $(s,i)\circ(t,j)=(t\circ s^{i(v)},i^{v}\circ j)$ where $(t,j)\colon T^v\leftrightarrows S$. 

The fact that composition is well defined follows from the definition of connections. We state an upper bound on the Ramsey degrees for this category.
\begin{theorem}\label{2.20}
	If $S\in \textnormal{Ob}(PSC)$, let $$A=\{x\in S\colon x\text{ is not the root of }S\text{ and }x\text{ has at most one immediate successor in }S\}.$$
	Then for all $T\in \textnormal{Ob}(PSC)$ and $r>0$, there is a $V\in \textnormal{Ob}(PSC)$ so that for any coloring $\chi\colon \textnormal{Hom}_{PSC}(S,V)\to r$, there is a $(t,j)\in\textnormal{Hom}_{PSC}(T,V)$ such that for any $(s,i)\in\textnormal{Hom}_{PSC}(S,T)$, $\chi((t,j)\circ (s,i))$ depends only the set, $$B=\{x\in A\colon i_s(x)\neq i(x)\}.$$
	In particular, $rd(S)\leq |P(A)|$.
\end{theorem}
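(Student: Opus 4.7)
The plan is to reduce Theorem \ref{2.20} to the Dual Ramsey Theorem for trees (Theorem \ref{2.12}, via the main lemma of \cite{22}) through a product-coloring argument, using Lemma \ref{2.16} and Proposition \ref{1.1} to manage the $B$-parameterization. The (FP)/frank-functor framework of Section 5.1 provides the natural categorical language, with the forgetful functor $\delta\colon PSC \to PSC_{rs}$, $\delta(s, i) = s$ (identity on objects), serving as the bridge; frankness of $\delta$ at every tree is immediate via the lift $s \mapsto (s, i_s)$.

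First, I would use iterated applications of Proposition \ref{1.1} to enlarge $T$ to a tree $T'$ such that for each sealed rigid surjection $s\colon (T')^{v} \to S$ and each $B \subseteq A$, there is a canonical strong connection $(s, i_B^{s}) \in \textnormal{Hom}_{PSC}(S, T')$ with $B(s, i_B^{s}) = B$. By Lemma \ref{2.16}, the fibers of $\delta$ over $s$ are naturally indexed by subsets of $A$, since $i$ is pinned to $i_s$ at vertices outside $A$.

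Second, given any $r$-coloring $\chi\colon \textnormal{Hom}_{PSC}(S, V) \to r$ with $V$ yet to be determined, I would define $\chi'\colon \textnormal{Hom}_{PSC_{rs}}(S, V) \to r^{|\mathcal{P}(A)|}$ by $\chi'(s) = \bigl(\chi((s, i_B^{s}))\bigr)_{B \in \mathcal{P}(A)}$. Applying Theorem \ref{2.12} at $(S, T', r^{|\mathcal{P}(A)|})$ produces a tree $V$ and a sealed rigid surjection $t\colon V^{w} \to T'$ making $\chi'$ constant on $\{s \circ t : s \in \textnormal{Hom}_{PSC_{rs}}(S, T')\}$. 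Lifting $t$ to $(t, i_t) \in \textnormal{Hom}_{PSC}(T', V)$, for each fixed $B$ the color $\chi((t, i_t) \circ (s, i_B^{s}))$ is then independent of $s$. Combined with a verification that for any strong connection $(s, i)$ with $B(s, i) = B$ we have $\chi((t, i_t) \circ (s, i)) = \chi((t, i_t) \circ (s, i_B^{s}))$, this delivers the desired $B$-dependence.

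The main obstacle is the verification in this last step: for a ``non-canonical'' $i$ (one with $B(s, i) = B$ but not literally equal to $i_B^{s}$ as a function), the composite $(t, i_t) \circ (s, i)$ is a genuinely distinct element of $\textnormal{Hom}_{PSC}(S, V)$ from $(t, i_t) \circ (s, i_B^{s})$ and a priori can receive a different color. The plan is to iterate Proposition \ref{1.1} further so that, after composing with the chosen $(t, i_t)$, all possible ``vertex choices'' above $i_s(x)$ for $x \in B$ become indistinguishable under $\chi$; equivalently, to re-derive (FP) at the $B$-level and then invoke Proposition \ref{2.5} to upgrade to (P). Executing this iteration, while keeping the composition with $(t, i_t)$ compatible with the strong connection structure and with the initial-subtree bookkeeping required by $PSC$, is the primary technical content of the proof.
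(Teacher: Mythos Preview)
Your plan has a genuine gap at exactly the point you flag as the ``main obstacle,'' and the proposed fix does not close it. The verification step --- that for every $(s,i)$ with $B(s,i)=B$ one has $\chi((t,i_t)\circ(s,i))=\chi((t,i_t)\circ(s,i_B^{s}))$ --- is not a residual technicality but essentially the content of the theorem. If you could show that after composing with some $(t,i_t)$ the color of $(s,i)$ depends only on $s$ and on $B(s,i)$, then the product-coloring step would be an easy finish; conversely, without that step the product-coloring argument tells you nothing about non-canonical embeddings. Your suggestion to ``iterate Proposition~\ref{1.1} further'' does not help: Proposition~\ref{1.1} manufactures a single tree in which many $B$-sets are realized, but it gives no mechanism for collapsing distinct embeddings $i,i'$ over the \emph{same} $s$ with the \emph{same} $B$-set to the same color. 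There are, in general, arbitrarily many such $i$ (once $T$ is large), and no amount of enlarging $T$ reduces their number.

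There is also a mismatch in the product-coloring step itself. Applying Theorem~\ref{2.12} to $\chi'$ makes $\chi(s\circ t,\,i_B^{\,s\circ t})$ constant in $s$, but $(t,i_t)\circ(s,i_B^{s})$ has embedding $i_t\circ i_B^{s}$, which is not in general the canonical $i_B^{\,s\circ t}$ (it is for $B=\emptyset$, where $i_\emptyset^{s}=i_s$, but not otherwise). So even granting your step~2 you would still need to reconcile these two families.

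The paper avoids this by a different architecture: rather than handling all of $A$ at once, it inducts on $|S|$ via a functor $\partial\colon PSC_S\to PSC_{S^w\!*2}$ that deletes only the $\leq_S$-largest vertex $v$ and records the single bit ``$i(v)=i_s(v)$ or not.'' Proving (FP) for $\partial$ is then a one-vertex problem, and crucially it is solved using Lemma~\ref{2.13} (the main lemma behind Theorem~\ref{2.12}), not Theorem~\ref{2.12} itself: Lemma~\ref{2.13} produces a rigid surjection that \emph{extends the identity} on a prescribed initial subtree, and this identity-extension is exactly what lets the induction go through. The case $i(v)\neq i_s(v)$ is handled by a coding trick (adding a new leaf above $v$) that reduces it to the case $i(v)=i_s(v)$. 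Iterating $\partial$ via Propositions~\ref{2.4} and~\ref{2.5} then yields the theorem. Your forgetful functor $\delta(s,i)=s$ discards all embedding information in one shot, which is why the burden reappears as an intractable verification.
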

In Section 5.3 we prove Theorem 3.4 in two steps. We construct a frank functor with condition (P) from $Conn_T$ to $PSC$. By Proposition 5.1 such a functor reduces proving Theorem 3.4 to proving Theorem 5.3. For that proof we introduce a new functor which fulfills condition (FP).

In \cite{12} the author uses the Graham-Rotschild Theorem in \cite{8} to prove Theorem \ref{2.9} (the Ramsey statement for connections between linear orders). It is not enough to use Theorem \ref{2.12} to prove Theorem \ref{2.20}. However, we utilize the main lemma which proves Theorem \ref{2.12} in \cite{22}. To state this result we recall the notion of forests following the presentation from \cite{22}.
\begin{definition}
	A \textbf{forest} is a partial order $(T,\sqsubseteq$) where for all $x\in T$ the predecessors of $x$ are linearly ordered by $\sqsubseteq_T$.
\end{definition}
Note that any forest is a tree without the root. We use forests to construct new trees with the following method.

For any forest $T$, the tree $1\oplus T$ is constructed by adding a new element to $T$. We define  a tree order on $1\oplus T$ so that the new element is the root. We say that a forest is \textbf{ordered} if there is a $\leq_T$ which extends to a tree ordering on $1\oplus T$. 

We construct a new tree by adding multiple ordered forests $T_1,...,T_n$ to a tree $T$. If $x_1,...,x_n$ are distinct vertices of the tree $T$, then the ordered tree $$V=(T;x_1,...,x_n)\oplus (T_1,...,T_n)$$is defined with a  base set of the disjoint union of $T$ and all $T_i$. The tree relation $\sqsubseteq_V$ extends $\sqsubseteq_T$ and each $\sqsubseteq_{T_i}$. Additionally, if $x\sqsubseteq_T x_i$, then $x\sqsubset_V y$ for all $y\in T_i$. So we put each $T_i$ on top of $x_i$. The tree order $\leq_V$ extends $\leq_T$ and each $\leq_{T_i}$. Lastly, every $T_i$ is the final interval of the set $\{v\in V\colon x_i\sqsubseteq v\}$ in the order $\leq_V$.

Given these definitions we can state the main lemma from \cite{22} that we use in our proof of Theorem 3.4. We give a reformulation of {\cite[LP restatement 1]{22}} which is more applicable to our purposes.
\begin{lemma}\label{2.13}
	Let $S$ and $T$ be trees where $w$ is the second largest vertex of $S$ and $T$ is of the form $(T';x_1,\dots,x_n)\oplus(T_1,\dots,T_n)$. Fix a rigid surjection $s'\colon T'\to S^w$ and a number $r>0$. There is a tree of the form $V=(T';x_1,\dots,x_n)\oplus (V_1,\dots,V_n)$ where $V_1,\dots,V_n$ are ordered forests so that for any $r$-coloring $\chi$ of the sealed rigid surjections $u\colon V^x\to S$ which extend $s'$ and have $x\in V_1$, there is a rigid surjection $t\colon V\to T$ which extends the identity on $T'$ so that $$t\circ \{s\colon T^y\to S| \ s \text{ is a sealed rigid surjection which extends }s' \text{ and }y\in T_1\}$$
	is $\chi$ monochromatic.
\end{lemma}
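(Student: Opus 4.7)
The plan is to derive Lemma \ref{2.13} as a direct reformulation of LP restatement 1 of \cite{22}. Since the author has already identified our statement as a reworded version of that lemma, the Ramsey content is encapsulated in \cite{22}, and the task reduces to a careful translation between our sealed-connection notation and the forest-lifting framework of that paper.

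First, I would unpack the sealed hypothesis. Because $w$ is the second largest vertex of $S$, the largest leaf $v$ of $S$ is uniquely determined with $w\sqsubset_S v$, and $S^w=S\setminus\{v\}$. A sealed rigid surjection $u\colon V^x\to S$ extending $s'$ is thus the same data as: the fixed map $s'\colon T'\to S^w$, together with a rigid surjection from $V^x\setminus T'=V_1^x$ onto the cone of $S$ culminating at $v$, subject to the seal condition $i_u(v)=x$. The analogous decomposition applies to the sealed rigid surjections $s\colon T^y\to S$ extending $s'$ with $y\in T_1$.

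Second, I would reduce to a single active slot by taking $V_i=T_i$ for all $i\neq 1$. Under this choice, any rigid surjection $t\colon V\to T$ extending the identity on $T'$ must act as the identity off of $V_1$, and its only freedom is a rigid surjection $V_1\to T_1$ respecting the attachment at $x_1$. This is precisely the setup of LP restatement 1, which produces both a forest-level enlargement $V_1$ and a lifting realizing the required monochromaticity under any $r$-coloring of sealed rigid surjections from initial pieces of $V_1$ onto the cone of $S$ above $s'(x_1)$ containing $v$.

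Third, I would invoke LP restatement 1 with source forest $T_1$ and target this cone of $S$, and then read the conclusion back to produce $V=(T';x_1,\dots,x_n)\oplus(V_1,T_2,\dots,T_n)$ and the lifting $t\colon V\to T$. The main obstacle, and the only place careful work is needed, is verifying that this dictionary preserves composition with $t$: namely, for any sealed rigid surjection $s\colon T^y\to S$ extending $s'$ with $y\in T_1$, the map obtained by composing $s$ with the piece of $t$ lying over $T^y$ must itself be a sealed rigid surjection $u\colon V^x\to S$ in the family colored by $\chi$, and its color must match the one prescribed by the output of LP restatement 1. This is a bookkeeping verification around the seal condition, and it transfers cleanly because the equality $i_u(v)=x$ reduces under the decomposition $V^x=T'\sqcup V_1^x$ to the analogous seal condition on the $V_1$-piece alone.
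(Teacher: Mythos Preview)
The paper does not give a proof of this lemma at all: it is stated as ``a reformulation of \cite[LP restatement 1]{22}'' and then used as a black box. Your plan to obtain it by translating between the sealed-surjection language here and the forest-lifting language of \cite{22} is therefore exactly the paper's (implicit) approach, and the high-level strategy is sound.

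Two points in your outline deserve care. First, your claim that $w\sqsubset_S v$ is not in general true: $w$ is only the second largest element in the linear order $\leq_S$, so it may well be a leaf incomparable with $v$ in $\sqsubseteq_S$ (the paper itself later writes $x=v\wedge_S w$ in the proof of Lemma~\ref{4.7}, which would be vacuous if $w\sqsubset_S v$). What is correct, and all you actually need, is that $S^w=S\setminus\{v\}$. Second, the decomposition $V^x\setminus T'=V_1^x$ is not automatic from the definition of $\oplus$: for $x\in V_1$ the initial segment $V^x$ can contain pieces of the other $V_i$, depending on how the $x_i$ are situated relative to one another in $T'$. In the applications (Lemmas~\ref{4.4} and \ref{4.7}) the $x_i$ and $T_i$ are chosen so that this bookkeeping works out, but your reduction ``take $V_i=T_i$ for $i\neq1$'' would need to be justified against the precise statement of LP restatement 1 rather than asserted.
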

In our proofs the $T_1,\dots,T_n$, $x_1,\dots,x_n$ and $T'$ are chosen so that any $s\colon T^y\to S$ which we consider has $y\in T_1$.
\subsection{Proof of Theorem \ref{2.18}}
In this section we prove the characterization of the self-dual Ramsey degrees for connections. We start in the first section by proving that Theorem 3.4 follows from Theorem \ref{2.20} (the statement for partial strong connections) by using a functor $\delta$. Then we prove Theorem \ref{2.20} using another functor $\partial$ in two parts. In the second section we define $\partial$ and show that if $\partial$ satisfies (FP), then Theorem \ref{2.20} holds. We prove that $\partial$ satisfies (FP) in the last section.
	\subsubsection{Transferring the Ramsey degrees from $PSC$ to $Conn_T$.}
	We prove that Theorem \ref{2.20} implies Theorem \ref{2.18}, using the following functor.
	\begin{definition}
		Let $\delta$ be the functor $\delta\colon Conn_T\to PSC$  defined by $\delta(T)=T$ and for a connection $(s,i)\colon T\leftrightarrows S$, let $\delta(s,i)=(s^{i(v)},i)$ where $v$ is the largest leaf of $S$.
	\end{definition}
	It is easy to check that $\delta$ is a functor. We remark that the functor $\delta$ does not change the embedding $i$, but changes the rigid surjection $s$. Next we show that this functor is frank and satisfies (FP).
	\begin{lemma}\label{4.3}
	The functor $\delta $ is frank.
	\end{lemma}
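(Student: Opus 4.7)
The plan is to show $\delta$ is frank at every object $S \in \textnormal{Ob}(Conn_T)$. Since $\delta$ acts as the identity on objects, given any $D \in \textnormal{Ob}(PSC)$ I will take $T := D$; the task then reduces to showing $\delta(\textnormal{Hom}_{Conn_T}(S,D)) = \textnormal{Hom}_{PSC}(S,D)$. Functoriality automatically gives the inclusion $\subseteq$, so the real work is to lift every strong connection $(s',i') \colon D^v \leftrightarrows S$ to a full connection $(s,i) \colon D \leftrightarrows S$ with $\delta(s,i) = (s',i')$. Here $v = i'(v_0)$, where $v_0$ is the $\leq_S$-largest leaf of $S$, by the definition of a strong connection. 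Let $\rho_S$ and $\rho_D$ denote the roots of $S$ and $D$.

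For the construction I will take the simplest possible extension: set $i = i'$, now viewed as a map into the larger codomain $D$, and define $s \colon D \to S$ by $s(y) = s'(y)$ for $y \in D^v$ and $s(y) = \rho_S$ for $y \in D \setminus D^v$. I then need to verify conditions (a), (b), (c) of the connection definition. Condition (c) will be immediate, since $D^v$ is an initial subtree of $D$, so $i'$ remains an embedding once its codomain is enlarged. For condition (a), the key observation is that $i(x) = i'(x) \leq_D v$ for every $x \in S$, because $x \leq_S v_0$ and $i'$ preserves the linear order; this forces any $y <_D i(x)$ to lie in $D^v$, so $s(y) = s'(y) \leq_S x$ is inherited from $(s',i')$, while $s(i(x)) = s'(i'(x)) = x$ is trivial.

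The main obstacle is condition (b): that $s$ is a rigid surjection. My plan is to compute $i_s(x) = \bigwedge s^{-1}(x)$ directly. For $x \neq \rho_S$ the preimage $s^{-1}(x) = (s')^{-1}(x)$ is unchanged, so $i_s(x) = i_{s'}(x)$. For $x = \rho_S$ the extension appends $D \setminus D^v$ to the preimage, but $\rho_D$ already belongs to $(s')^{-1}(\rho_S)$ (since every embedding fixes the root, giving $i_{s'}(\rho_S) = \rho_D$), and $\rho_D$ is the $\sqsubseteq$-minimum of $D$; hence $i_s(\rho_S) = \rho_D = i_{s'}(\rho_S)$. Thus $i_s = i_{s'}$, which is already an embedding, so $s$ is rigid. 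The argument will then close with $\delta(s,i) = (s \restriction D^{i(v_0)}, i) = (s',i')$. The root-dumping strategy succeeds precisely because the newly created preimages of $\rho_S$ all sit above $\rho_D$, and so cannot perturb the meets defining $i_s$.
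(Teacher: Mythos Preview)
Your argument is correct and follows the same construction as the paper: extend the partial surjection $s'$ to all of $D$ by sending every vertex outside $D^v$ to the root of $S$, keep the embedding unchanged, and observe that $i_s = i_{s'}$ so that $s$ is rigid and $(s,i)$ is a connection with $\delta(s,i) = (s',i')$. Your write-up is somewhat more explicit than the paper's (you spell out the verification of conditions (a)--(c) and the computation of $i_s$ at the root), but the approach is identical.
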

	\begin{proof}
		Let $T$ and $S$ be trees. We need to show that $\delta(\text{Hom}_{Conn_T}(S,T))=\text{Hom}_{PSC}(S,T)$, which proves that $\delta$ is frank. Pick $(s',i)\in \text{Hom}_{PSC}(S,T)$, we show that $(s',i)$ is in the image of $\delta$. By the definition of $PSC$, $s'\colon T^{w}\to S$ for some $w\in T$. Since $(s',i)$ is strong $w=i(v)$ where $v$ is the largest leaf in $S$. Define $s\colon T\to S$ by $s(y)=s'(y)$ for all $y\in T^{w}$ and $s(y)=x_0$ where $x_0$ is the root of $S$, for all $y>_T w$. Note that $i_{s'}=i_{s}$ by the definition of $s$. Hence, $s$ is a rigid surjection and $(s,i)$ is a connection. Since $\delta(s,i)=(s',i)$ we have completed the proof.
	\end{proof}
	To prove that Theorem \ref{2.20} implies our categorization of the the Self-Dual Ramsey degrees for trees it remains to show the following.
	\begin{lemma}\label{4.4}
		The functor $\delta$ satisfies condition (FP).
	\end{lemma}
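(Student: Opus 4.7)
The plan is to deduce (FP) for $\delta$ from the Ramsey-degree statement for $PSC$ in Theorem~\ref{2.20}, using frankness (Lemma~\ref{4.3}) to transport morphisms between the two categories and an auxiliary coloring to absorb the fiber freedom of $\delta$.

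Fix $T \in \textnormal{Ob}(Conn_T)$, a finite set $e \subseteq \textnormal{Hom}_{PSC}(S, T)$, and $r > 0$. I begin by picking an arbitrary distinguished element $f' = (s'_0, i_0) \in e$; this flexibility at the outset is precisely what distinguishes (FP) from the stronger (P). Let $w_0 = i_0(v)$ where $v$ is the $\leq_S$-largest leaf of $S$. A key structural observation is that for any $(t, j) \in \textnormal{Hom}_{Conn_T}(T, V)$, the connection property forces $t$ to map the initial subtree $V^{j(w_0)}$ into $T^{w_0}$ (since $t(y) \leq_T x$ whenever $y <_V j(x)$), exactly the region on which all elements of the $\delta$-fiber $\textnormal{Hom}^{f'}_V(S, T)$ agree with $s'_0$.

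Next, I apply Theorem~\ref{2.20} to $S, T$, and an enlarged color set $r'$ (chosen to dominate the sizes of $\delta$-fibers over morphisms in $\textnormal{Hom}_{PSC}(S, V)$) to produce a tree $V \in \textnormal{Ob}(PSC)$, which via identity-on-objects is simultaneously an object of $Conn_T$. For the pre-committed $g'$, I take the Ramsey-degree witness $(t', j') \in \textnormal{Hom}_{PSC}(T, V)$ provided by Theorem~\ref{2.20}, and by frankness I lift $(t', j')$ to $g \in \textnormal{Hom}_{Conn_T}(T, V)$ with $\delta(g) = g'$. Condition (ii) of (FP) — that $\delta(g) \circ e'' = g' \circ e''$ for every $e'' \in e$ — is then immediate from the equality $\delta(g) = g'$.

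The main obstacle is verifying condition (i): that $g \circ \textnormal{Hom}^{f'}_V(S, T)$ is $\chi$-monochromatic. The core computation shows that for any $(s, i_0)$ in the fiber over $f'$, the composite $g \circ (s, i_0) = (s \circ t, j \circ i_0)$ has $\delta$-image $(t', j') \circ f'$, a single fixed $PSC$-morphism independent of the fiber element, so all these composites occupy one $\delta$-fiber in $\textnormal{Hom}_{Conn_T}(S, V)$. To promote this "same fiber" conclusion to genuine $\chi$-monochromaticity, I would define the auxiliary coloring $\tilde\chi$ on $\textnormal{Hom}_{PSC}(S, V)$ fed into Theorem~\ref{2.20} to record the entire $\chi$-coloring on each $\delta$-fiber simultaneously, taking values in an appropriate finite product indexed by the fiber size, so that the constancy of $\tilde\chi$ on the target PSC-fiber forces $\chi$ itself to be constant on the composites. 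Ensuring this compatibility — that the lift enumeration used in $\tilde\chi$ interacts correctly with right-composition by $g$, so that distinct fiber elements occupy the same indexed slot rather than different ones — is the technical heart of the argument and where I expect the real work to lie.
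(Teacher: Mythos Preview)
Your proposal has two genuine gaps. First, a quantifier error: in (FP) the morphism $g'$ must be fixed \emph{before} the coloring $\chi$, but the witness $(t',j')$ from Theorem~\ref{2.20} depends on the coloring $\tilde\chi$ you feed it, and $\tilde\chi$ is built from $\chi$. You cannot pre-commit to a Ramsey witness whose input is not yet available. Second, and more fundamentally, the auxiliary-coloring scheme does not close. Even if you could bound fiber sizes uniformly (which already requires knowing $V$ before choosing $r'$, a circularity you do not address), recording the $\chi$-pattern on each target $\delta$-fiber via $\tilde\chi$ tells you nothing about which \emph{slots} in that fiber the composites $g\circ(s,i_0)$ actually occupy. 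Different $(s,i_0)$ in the source fiber extend $s'_0$ differently on $T\setminus T^{w_0}$; after composing with $t$ they land in different positions of the target fiber, and nothing forces those positions to share a $\chi$-color. The ``enumeration compatibility'' you flag is not a technicality to be cleaned up later but the entire obstruction, and Theorem~\ref{2.20} offers no leverage on it.

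The paper's argument is entirely different and rests on a structural fact about $\delta$ you did not exploit: $\delta(s,i)=(s^{i(v)},i)$ leaves $i$ unchanged, so the $\delta$-fiber over any $(s',i)$ consists of pairs $(\tilde s,i)$ with $i$ \emph{fixed} and $\tilde s$ ranging over rigid surjections $T\to S$ extending $s'$. The monochromaticity requirement thus becomes a Ramsey statement purely about rigid surjections extending a given partial one, with no embedding freedom at all, and this is exactly what Lemma~\ref{2.13} supplies directly. The paper also chooses $f'=(s,i)\in e$ not arbitrarily but with $i(w)$ \emph{maximal} over $e$; this lets both $g'$ and the eventual $g$ be taken to extend the identity on $T^{i(w)}$, so that $g'\circ e''=\delta(g)\circ e''=e''$ for every $e''\in e$, handling condition (ii) without ever needing $g'=\delta(g)$.
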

	\begin{proof}
		Let $S$ and $T$ be trees, $e\subseteq \text{Hom}_{PSC}(S,T)$, and $r>0$. Pick $(s,i)\in e$ so that for all $(s',i')\in e$, $i'(w)\leq_T i(w)$ where $w$ is the largest leaf in $S$. We will construct a tree $V$ so that $T^{i(w)}\subseteq V$. We claim it is enough to show that for any coloring $$\chi\colon \text{Hom}_{Conn_T}(S,V)\to r$$ there is a $(v,j)\in \text{Hom}_{Conn_T}(T,V)$ so that $v\restriction T^{i(w)}=\text{Id}_{T^{i(w)}}$, $j^{i(w)} =\text{Id}_{T^{i(w)}}$ and $(v,j)\circ \text{Hom}_{Conn_T}^{(s,i)}(S,T)$ is monochromatic. By the definition of (FP) this means we need to find a $(v',j')\in \in \text{Hom}_{PSC}(T,V)$ so that $$\delta(v,j)\circ (s',i')=(v',j')\circ (s',i')$$ for all $(s',i')$ in $e$. Fix any $(v',j')\in \text{Hom}_{PSC}(T,V)$, so that $v'\restriction T^{i(w)}=\text{Id}_{T^{i(w)}}$ and $j'^{i(w)}=\text{Id}_{T^{i(w)}}$, then for any $(s',i')\in e$, $$(v',j')\circ (s',i')=(s\circ t'^{j'(i'(w))},j'^{i'(w)}\circ i')=(s',i')$$
		since $i'(w)\leq_T i(w)$ and $(v',j')$ extends the identity on $T^{i(w)}$. Similarly for any $(v,j)\in \text{Hom}_{Conn_T}(V,T)$ so that $v\restriction T^{i(w)}=\text{Id}_{T^{i(w)}}$ and $j^{i(w)}=\text{Id}_{T^{i(w)}}$, $\delta(v,j)\circ (s',i')=(s',i')$ for all $(s',i')\in e$. Thus $$\delta(v,j)\circ (s',i')=(v',j')\circ (s',i')=(s',i')$$
		which completes the proof of claim.
		
		 Next we construct the appropriate tree $V$.
		Note that any $(t,j)$ with $\delta(t,j)=(s,i)$ must have injection $j=i$. Thus, we only consider the  coloring of rigid surections. Let $x_0,x_2,x_3,...,x_n=i(w)$ enumerate the predecessors of $i(w)$ in the $\sqsubseteq_T$ order. We define the trees $T_m$ by $$T_m=\{x\in T\colon i(w)<_T x, \ x_m\sqsubseteq x, \text{ and if }i<n\text{ then }x_{m+1}\not\sqsubseteq x\}$$
		with the inherited relations. Note that $T=(T^{i(w)};x_0,...,x_n)\bigoplus(T_0,...,T_n)$. We can assume that $T_0$ is empty since every rigid surjection $t$ which extends $s$ must map all elements of $T_0$ to the root of $S$. Hence, the rigid surjection $t$ is completely determined by $t|_{T/T_0}$. We will apply Lemma 5.4 to new trees $T_*$ and $S_*$. Let $T_*$ be $T$ along with a new vertex $t_1$ added so that $t_1$ is the largest immediate successor of the root and similarly let $S_*$ be $S$ along with a new vertex $v$ that is the largest immediate successor of the root. Let $T_1=\{t_1\}$, so $T_*=(T^{i(w)};x_0,x_2,\dots,x_n)\oplus(T_1,T_2,\dots,T_n)$. We apply Lemma \ref{2.13} to $S_*$, $T_*$ and $s$ to obtain a tree $V_*=(T^{i(w)};x_0,x_2,\dots,x_n)\oplus(V_1,V_2,\dots,V_n)$. 
		We claim the tree $V=(T^{i(w)};x_2,\dots,x_n)\oplus(V_2,\dots,V_n)$ is as desired.
		
		Indeed, fix a coloring $\chi\colon \text{Hom}_{Conn_T}(V,S)\to r$. We define a new coloring $\chi_1$ on the rigid surjections $u\colon V\to S$ which extend $s$, by $\chi_1(u)=\chi(u,i)$ where we identify $i\colon S\to T$ with $i\colon S\to V$ (since $V$ contains a copy of $i(S)$). We once again define a new coloring, $\chi_2$ on the sealed partial rigid surjections $u\colon V_{*}^{x}\to S_*$ which extend $s$ and have $x\in V_1$, by $\chi_2(u)=\chi_1(u|V)$. This is a well defined coloring, since by the definition of rigid surjection $u|V$ will be a map from $V$ to $S$. Then by Lemma \ref{2.13} there is a rigid surjection $t\colon V_*\to T_*$ which extends the identity of $T^{i(w)}$ so that $$t\circ \{q\colon T_{*}^{y}\to S_*| \ q \text{ is a sealed rigid surjection which extends }s \text{ and }y\in T_1\}$$ is $\chi_2$ monochromatic. The map $t'\colon V\to T$ given by $t'=t|V$ is a well defined rigid surjection. Hence, by the definition of $\chi_2$, $$t'\circ \{q\colon T\to S| \ q \text{ is a sealed rigid surjection which extends }s \}$$ is $\chi_2$ monochromatic. Since $t'$ extends the identity on $T^{i(w)}$, so does $i_{t'}$. 
		Then $(t',i_{t'})\circ \text{Hom}_{Conn_T}^{(s,i)}(S,T)$ is $\chi$ monochromatic, by the definition of $\chi$, which completes the proof.
	\end{proof}

\subsubsection{The functor $\partial$}
Instead of defining a functor on the category $PSC$ we consider a local approach. In particular, we fix a subcategory $PSC_S$ for every tree $S$. 
\begin{definition}
	For each $S\in \text{Ob}(PSC)$, let the objects of $\bf{PSC_S}$ be all $T\in \text{Ob}(PSC)$ so that $\text{Hom}_{PSC}(S,T)\neq \emptyset$. For all $T\in \text{Ob}(PSC_S)\backslash S$, let $\text{Hom}_{PSC_S}(S,T)=\text{Hom}_{PSC}(S,T)$, and for all $V\in \text{Ob}(PSC_S)\backslash S$ we let $$\text{Hom}(PSC_S)(T,V)=\{(s,i)\in \text{Hom}(PSC)(T,V)\colon i=i_s\}.$$
\end{definition}
Note that for any $S\neq T\in \text{Ob}(PSC_S)$, if $rd_{PSC_S}(S,T)\leq k$, then $rd_{PSC}(S,T)\leq k$. 

The codomain of our functor will be $PSC_S$ with an additional bookkeeping mechanism.

\begin{definition}
	For each $S\in \text{Ob}(PSC)$ and $N>0$, we define a category $\bf PSC_{S\ast 2^N}$ with the same objects as $PSC_{S}$. If $T,V\in \text{Ob}(PSC_{S})\backslash S$, then the morphisms between them are the same as in $PSC_{S}$. For $T\in \text{Ob}(PSC_{S})\backslash S$ we have $$\text{Hom}_{PSC_{S\ast 2^N}}(S,T)=\text{Hom}_{PSC_S}(S,T)\times 2^N.$$ Composition is defined as in $PSC_S$ for trees $T\neq S$ ,$V\neq S$ and by $f\circ_{PSC_{S\ast 2^N}} (g,\vec{x})=(f\circ_{PSC_S}g,\vec{x})$.
\end{definition}

We define a functor $\partial$ which we show satisfies condition (P). This functor will remove largest element $v$ of $S$ and keep track of when $i(v)\neq i_s(v)$.

Let $S$ be a tree with multiple elements, let $v$ denote the largest $\leq_S$ element of $S$ and $w$ denote the second $\leq_S$ largest element of $S$.  We define $\partial\colon PSC_S\to PSC_{S^w\ast 2}$. On objects $\partial(T)=T$, if $T\in \text{Ob}_{PSC_S}\backslash S$ and $\partial(S)=S^{w}$. Let $\partial$ be the identity on $\text{Hom}_{PSC_S}(T,V)$ for all $T,V\in \text{Ob}(PSC_S)\backslash S$. For $T\in \text{Ob}(PSC_S)\backslash S$ and $(s,i)\in \text{Hom}_{PSC_S}(S,T)$ we define

$$\partial(s,i)=\left\{
\begin{array}{cc}
	(s^{i(w)},i^{w},1) & \text{ if }i(v)\neq i_s(v)\\
	(s^{i(w)},i^{w},0) & \text{ otherwise}\\
\end{array}\right.$$

So $\partial$ prunes the largest leaf $v$ from $S$, but remembers the information on whether $i(v)=i_s(v)$ or not. We show that $\partial$ is a functor. This proof uses that our definition of $\text{Hom}_{PSC_S}(T,V)$ only includes partial sealed rigid surjections for $T,V\in \text{Ob}(PSC_S)\backslash S$.
\begin{proposition}\label{4.5}
	For any tree $S$ with at least two elements, where $w$ is the second $\leq_S$ largest element, the map $\partial\colon PSC_S\to PSC_{S^{w}\ast 2}$ is a functor.
\end{proposition}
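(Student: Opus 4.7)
The plan is to verify the two functor axioms for $\partial$. Preservation of identities is straightforward: for $T \neq S$, $\partial$ is defined to be the identity on $\mathrm{Hom}_{PSC_S}(T,T)$, while $(\mathrm{Id}_S, \mathrm{Id}_S)$ satisfies $\mathrm{Id}_S(v) = v = i_{\mathrm{Id}_S}(v)$, so $\partial(\mathrm{Id}_S) = (\mathrm{Id}_{S^w}, \mathrm{Id}_{S^w}, 0)$, which is the identity of $\partial(S) = S^w$ in $PSC_{S^w \ast 2}$. When a composition involves two morphisms between non-$S$ objects, $\partial$ acts as the identity on both factors and on the composite, and composition in $PSC_{S^w \ast 2}$ between non-$S^w$ objects is inherited from $PSC_S$, so functoriality holds trivially in that case.

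The substantive case is $(t,j) \circ (s,i)$ with $(s,i) \in \mathrm{Hom}_{PSC_S}(S,T)$ and $(t,j) \in \mathrm{Hom}_{PSC_S}(T,V)$ for $T, V \neq S$. I would unpack the $PSC$-composition formula to write $(t,j) \circ (s,i) = (A,B)$ with $A = s \circ t^{j(i(v))}$ and $B = j^{i(v)} \circ i$, then apply $\partial$ to obtain a triple $(A^{B(w)}, B^w, \varepsilon')$, where $\varepsilon' = 1$ iff $B(v) \neq i_A(v)$. On the other side, the rule $f \circ (g,\vec{x}) = (f \circ g, \vec{x})$ in $PSC_{S^w \ast 2}$ gives $\partial(t,j) \circ \partial(s,i) = ((t,j) \circ (s^{i(w)}, i^w), \varepsilon)$, where $\varepsilon = 1$ iff $i(v) \neq i_s(v)$.

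Matching the first two coordinates is routine bookkeeping: because $w \leq_S v$ forces $i(w) \leq_T i(v)$ and $j(i(w)) \leq_V j(i(v))$, the restrictions $A^{B(w)}$ and $B^w$ agree with the $PSC$-composite $(t,j) \circ (s^{i(w)}, i^w) = (s^{i(w)} \circ t^{j(i(w))}, j^{i(w)} \circ i^w)$ by standard behavior of rigid surjections and embeddings under initial-subtree restriction.

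The crux is showing $\varepsilon = \varepsilon'$, and this is where the definition of $PSC_S$, which forces $j = i_t$ for $(t,j) \in \mathrm{Hom}_{PSC_S}(T,V)$ with $T,V \neq S$, becomes essential. Using that $i_t$ is an embedding (hence preserves meets) and $t \circ i_t = \mathrm{Id}_T$, one computes $i_A(v) = \bigwedge t^{-1}(s^{-1}(v)) = i_t(i_s(v)) = j(i_s(v))$, while $B(v) = j(i(v))$. Injectivity of $j$ then gives $B(v) = i_A(v)$ iff $i(v) = i_s(v)$, so $\varepsilon = \varepsilon'$. I expect the main obstacle to be exactly this step: once the $j = i_t$ hypothesis is invoked the identity falls out cleanly, but its role in making the bit coherent under composition is the whole point of restricting the morphisms of $PSC_S$ in this way.
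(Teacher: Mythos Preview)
Your proposal is correct and follows essentially the same approach as the paper. The paper's proof is terser but relies on exactly the two facts you isolate: the identity $i_{s\circ t}=i_t\circ i_s$ for rigid surjections (your computation of $i_A(v)$) and injectivity of $i_t$, together with the observation that the first two coordinates match by the definition of composition in $PSC$; you have simply unpacked these points in more detail.
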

\begin{proof}
	It is clear that $\partial$ preserves identities and composition between $T,V,U\in\text{Ob}_{PSC_S}\backslash S$. So let $T,V\in \text{Ob}(PSC_S)\backslash S$, $(t,i_t)\in \text{Hom}_{PSC_S}(T,V)$, and $(s,i)\in\text{Hom}_{PSC_S}(S,T)$. We need to show that $(t,i_t)\circ(s^{i(w)},i^{w})=((s\circ t)^{i_t\circ i(w)},(i_t\circ i)^{w})$ and that $i_t\circ i(v)=i_{s\circ t}(v)$ if and only if $i(v)=i_s(v)$. The first condition follows from the definition of composition in $PSC$, while the second follows from $i_t$ being an injection and the fact that $i_{s\circ t}=i_t\circ i_s$.
\end{proof}
The goal of this section is to prove that $\partial$ fulfilling (FP) implies Theorem \ref{2.20}. To do this we iterate  $\partial$.

For a tree $S$ with at least two elements, where $w$ is the second $\leq_S$ largest vertex in $S$ and all $N>0$, we define a functor $\partial_N\colon PSC_{S\ast 2^N}\to PSC_{S^{w}\ast 2^{N+1}}$. Let $\partial_N$ be the identity on objects and $\text{Hom}_{PSC_{S\ast 2^N}}(T,V)$ for all $T,V\in\text{Ob}(PSC_{S\ast 2^N})\backslash S$. For $(s,i,\vec{x})\in \text{Hom}_{PSC_{S\ast 2^N}}(S,T)$, define $\partial_N(s,i,\vec{x})=(\partial(s,i),\vec{x})$.

The proof that $\partial_N$ is a functor follows from the fact $\partial$ being a functor. Furthermore, it is easy to see that if $\partial$ satisfies (FP), then $\partial_N$ satisfies (FP). We use this fact to prove Theorem \ref{2.20} from the statement that $\partial$ fulfills (FP). Then in the next section, we present the proof that $\partial$ has (FP).

\begin{proof}[Proof of Theorem \ref{2.20} from $\partial$ fulfilling (FP)]
	Let $S$ be a tree, and $T\in\text{Ob}(PSC_{S})\backslash S$.
	We claim that if $S=1$ where $1$ is the unique tree with a single element $|\text{Hom}_{PSC_1}(1,T)|=1$. Indeed, the domain of $s$ for a pair $(s,i)\in\text{Hom}_{PSC_1}(1,T)$ must be the root of $T$, since $i$ maps the root of $1$ to the root of $T$ and $(s,i)$ is strong. So we can assume that $|S|>1$ and let $n$ denote $|S|$.
	
	We define the functor  $$\Delta=\partial_{n-2}\circ\cdots\circ\partial_1\circ\partial$$ we claim that $\Delta$ fulfills $(P)$ at $S$.  To prove this, we first note that $\partial$ is sufficiently frank. For each $ T\in \text{Ob}(PSC_{S\ast 2^N})\backslash S$, $\partial_n$ is frank at $T$, since it is the identity on $\text{Hom}_{PSC_{S\ast 2^N}}(V,T)$ for $V\in \text{Ob}(PSC_{S\ast 2^N})\backslash S$. So since we assume that each $\partial_n$ fulfills (FP), Proposition \ref{2.4} and Proposition \ref{2.5} imply that $\Delta$ fulfills (P) at $S$. Thus $rd(S,T)\leq |\Delta(\text{Hom}_{PSC_S}(S,T))|$. To prove Theorem 5.3 it remains to show that for any $(s,i)\in\text{Hom}_{PSC_S}(S,T)$, $\Delta(s,i)$ depends on the set $$B=\{x\in A\colon i_s(x)\neq i(x)\}$$
	
	 Let $x_0,\dots,x_{n-1}$ enumerate $S$ in $\leq$ order. For any $(s,i)\in \text{Hom}_{PSC_{S}}(S,T)$, $\Delta(s,i)=(t,j,k_1,\dots,k_{n-1})$ where $(t,j)\in \text{Hom}_{PSC_1}(1,T)$ and  for all $N<n$, $k_N\in 2$ with $k_N=1$ if and only if $i(x_N)\neq i_s(x_N)$. As noted above $|\text{Hom}_{PSC_1}(1,T)|=1$, hence we identify $\Delta(s,i)$ with the sequence $(k_1,\dots, k_{n-1})$. Thus, $\Delta(s,i)$ is the indicator function of the set $B=\{x_N\in S\colon k_N=1\}$. By Lemma \ref{2.16} if $x_N$ has at least two immediate successors, then $i(x_N)=i_s(x_N)$ so $x_N\notin B$. Thus $B\subseteq A$, which proves $$B=\{x\in A\colon i_s(x)\neq i(x)\}.$$ 
\end{proof}
\subsubsection{Proof that $\partial$ fulfills (FP)}
In this section we simplify the statement that $\partial$ satisfies (FP) until it follows from Proposition \ref{2.4} and Lemma \ref{2.13}. We start by showing that $\partial$ fulfilling (FP) follows from two types of categories having the Ramsey property. Then we prove that the first class of categories have the Ramsey property by Lemma \ref{2.13}. For the second class we use Proposition \ref{2.4} to reduce this case to the first class of categories. We start with a reformulation of $\partial$ having (FP). This lemma is similar to the first claim in the proof of Lemma \ref{4.4}.
\begin{lemma}\label{4.6}
	Let $S$ be a tree with at least two elements, $w$ be the second largest $\leq_S$ element of $S$, and $T\in \text{Ob}(PSC_S)\backslash S$. If for all $r>0$ and $(s',i',a)\in \partial(\textnormal{Hom}_{PSC_{S}}(S,T))$ there is a tree $V$ so that $T^{i'(w)}$ is an initial subtree of $V$ and for any coloring $\chi\colon \rm{Hom}_{PSC_S}(S,V)\to r$, we can find a $(t,i_t)\in \rm{Hom}_{PSC_S}(T,V)$ so that $(t,i_t)\circ \rm{Hom}_{PSC_{S}}^{(s',i',a)}(S,T)$ is monochromatic and $(t^{i(w)},i_{t}^{i(w)})=\rm{Id}_{T^{i(w)}}$, then $\partial$ satisfies (FP) at $S,T$.
\end{lemma}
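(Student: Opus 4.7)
The plan is to mirror the opening argument of the proof of Lemma~\ref{4.4}: given a finite $e\subseteq \partial(\textnormal{Hom}_{PSC_S}(S,T))$ and $r>0$, I will single out one element $f'=(s',i',a)\in e$ whose $i'(w)$ is $\leq_T$-maximal among $\{i''(w)\colon (s'',i'',a'')\in e\}$. This choice is available because $e$ is finite and $\leq_T$ is a linear order, and its point is that for every other $(s'',i'',a'')\in e$ the initial subtree $T^{i''(w)}$ is contained in $T^{i'(w)}$, so any morphism of $PSC_S$ that acts as the identity on $T^{i'(w)}$ will automatically absorb all of $e$ under composition.

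With this $f'$ in hand, I will apply the hypothesis of the lemma to $f'$ and $r$ to produce a tree $V$ with $T^{i'(w)}$ as an initial subtree. To produce the $g'$ demanded by (FP), I apply the hypothesis once more to the constant coloring; this yields at least one $(t_0,i_{t_0})\in\textnormal{Hom}_{PSC_S}(T,V)$ satisfying $(t_0^{i'(w)},i_{t_0}^{i'(w)})=\textnormal{Id}_{T^{i'(w)}}$. Since $\partial$ acts as the identity on $\textnormal{Hom}_{PSC_S}(T,V)$, I set $g'=(t_0,i_{t_0})\in\partial(\textnormal{Hom}_{PSC_S}(T,V))$.

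Now for an arbitrary $r$-coloring $\chi\colon\textnormal{Hom}_{PSC_S}(S,V)\to r$, the hypothesis produces $g=(t,i_t)\in\textnormal{Hom}_{PSC_S}(T,V)$ with $g\circ\textnormal{Hom}_{PSC_S}^{f'}(S,T)$ $\chi$-monochromatic and $(t^{i'(w)},i_t^{i'(w)})=\textnormal{Id}_{T^{i'(w)}}$. The monochromatic clause of (FP) is immediate. For the agreement clause $\partial(g)\circ e_0 = g'\circ e_0$ for every $e_0=(s'',i'',a'')\in e$, I apply the composition formula in $PSC$ to compute
\[
(t,i_t)\circ(s'',i'') \;=\; \bigl(s''\circ t^{i_t(i''(w))},\; i_t^{i''(w)}\circ i''\bigr).
\]
By the maximality of $i'(w)$ we have $i''(w)\in T^{i'(w)}$, so $i_t(i''(w))=i''(w)$; and since $T^{i'(w)}$ is initial in $V$, the subtree $V^{i''(w)}$ coincides with $T^{i''(w)}$, on which both $t$ and $i_t$ act as the identity. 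Hence $t^{i_t(i''(w))}=\textnormal{Id}_{T^{i''(w)}}$ and $i_t^{i''(w)}=\textnormal{Id}_{T^{i''(w)}}$, so $(t,i_t)\circ(s'',i'')=(s'',i'')$. The same calculation applies verbatim to $(t_0,i_{t_0})$. Invoking the composition rule $f\circ_{PSC_{S^w\ast 2}}(h,\vec{x}) = (f\circ h,\vec{x})$ finally gives $\partial(g)\circ e_0 = ((s'',i''),a'') = g'\circ e_0$, as required.

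The only real content is the composition calculation in the third paragraph; the hard part is essentially bookkeeping, keeping the two orders $\sqsubseteq_T$ and $\leq_T$ and the various restrictions $X^y$ straight. Once the $\leq_T$-maximality of $i'(w)$ is seen to force $T^{i''(w)}\subseteq T^{i'(w)}$ for every $e_0\in e$, both clauses of (FP) fall out of a single application of the hypothesis to $f'$.
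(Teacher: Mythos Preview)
Your proposal is correct and follows essentially the same approach as the paper: choose the element of $e$ with $\leq_T$-maximal $i'(w)$, invoke the hypothesis at that element to obtain $V$, and then verify that any $(t,i_t)$ restricting to the identity on $T^{i'(w)}$ fixes every member of $e$ under composition, so that $\partial(g)\circ e_0=g'\circ e_0$ for all $e_0\in e$. Your write-up is in fact a bit more careful than the paper's: you explicitly produce $g'$ by applying the hypothesis to a constant coloring (the paper just takes ``any rigid surjection'' with the required identity restriction), and you spell out the composition calculation $(t,i_t)\circ(s'',i'')=(s'',i'')$ that the paper leaves implicit.
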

\begin{proof}
	Fix $S$ a tree, $T\in\text{Ob}(PSC_S)\backslash S$, $r>0$, and $e\subset \partial(\text{Hom}_{PSC_S}(S,T))$. We need to show that there are $V\in\textnormal{Ob}(PSC_S)$, $(s,i,a)\in e$ and $(u,i_u)\in \text{Hom}_{PSC_S}(T,V)$, so that for each $r$-coloring $\chi\colon \textnormal{Hom}_{PSC_S}(S,V)\to r$, there is a $(t,i_t)\in \textnormal{Hom}_{PSC_S}(T,V)$ for which \begin{center}$(t,i_t)\circ \rm{Hom}_{PSC_{S}}^{(s',i',a)}(S,T)$ is monochromatic and $(u,i_u)\circ (s',i',a')=(t,i_t)\circ (s',i',a')$\end{center} for all $(s',i',a')\in e$. Fix $(s,i,a)\in e$ so that $i(w)\geq_T i'(w)$ for all $(s',i',a')\in e$. Let $V$ be given by the assumption and $(u,i_u)$ be any rigid surjection so that\\ $(u^{i(w)},i_{u}^{i(w)})=\rm{Id}_{T^{i(w)}}$. By our hypothesis, it is enough to show that for any $(t,i_t)\in \text{Hom}_{PSC_S}(V,T)$ with $(t^{i(w)},i_{t}^{i(w)})=\rm{Id}_{T^{i(w)}}$,  \begin{center} $(u,i_u)\circ (s',i',a')=(t,i_t)\circ (s',i',a')$ for all $(s',i',a')\in e$.\end{center} By the definition of composition in $PSC_{S^{w}\ast 2}$ we need to show that $(u^{i'(w)},i_{u}^{i'{w}})\circ (s',i')=(t^{i'(w)},i_{t}^{i'(w)})\circ (s',i')$, but since $i'(w)\leq_T i(w)$, $$(u^{i'(w)},i_{u}^{i'(w)})=(t^{i'(w)},i_{t}^{i'(w)})=\text{Id}_{T^{i'(w)}}.$$
\end{proof}
In lieu of Lemma \ref{4.6} we will define new subcategories. Fix $S$  a tree with at least two elements, $T\in\text{Ob}(PSC_S)\backslash S$, and $(s,i,a)\in \partial(\text{Hom}_{PSC_S}(S,T))$. The category $PSC_{(s,i,a)}$ is a subcategory of $PSC$ with objects $S,T$ and trees $V$ so that $T^{i(w)}$ is an initial subtree of $V$, where $w$ is the second largest $\leq_S$ element of $S$. We define\\ $$\text{Hom}_{PSC_{(s,i,a)}}(S,T)=\text{Hom}_{PSC_S}^{(s,i,a)}(S,T)$$ For $V\in \text{Ob}(PSC_{(s,i,a)})\backslash S$ we let,
$$\text{Hom}_{PSC_{(s,i,a)}}(T,V)=\{(t,i_t)\in \text{Hom}_{PSC_S}(T,V)\colon (t^{i(w)},i_{t}^{i(w)})=\rm{Id}_{T^{i(w)}}\}.$$
All other morphisms in $PSC_{(s,i,a)}$ are the same as in $PSC_S$.

Note that Lemma \ref{4.6} states that if for all trees $S$ with at least two elements, $T\in \text{Ob}(PSC_S)\backslash S$, $(s,i,a)\in \partial(\text{Hom}_{PSC_S}(S,T))$, and $r>0$ there is a $V\in \text{Ob}(PSC_{(s,i,a)})$ so that $V\to(T)^{S}_{r}$ then $\partial$ fulfills (FP) at $(S,T)$. We will split this into two cases, one where $a=0$ and the other where $a=1$. 
\begin{lemma}\label{4.7}
	For all trees $S$, $T\in \text{Ob}(PSC_S)\backslash S$, $(s',i',0)\in \partial(\text{Hom}_{PSC_S}(S,T))$, and $r>0$ there is a $V\in \text{Ob}(PSC_{(s',i',0)})$ so that $V\to (T)^{S}_{r}$.
\end{lemma}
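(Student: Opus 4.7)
The plan is to reduce the Ramsey statement to a single application of Lemma \ref{2.13} by identifying $\textnormal{Hom}_{PSC_S}^{(s',i',0)}(S,T)$ with a set of sealed rigid surjections, and by showing that the ``second-leaf'' value attached to each such surjection lives in one fixed forest of the natural decomposition of $T$. First, observe that a connection $(s'',i'') \in \textnormal{Hom}_{PSC_S}^{(s',i',0)}(S,T)$ is determined by $s''$: one must have $i''|_{S^w} = i'$ and $i''(v) = i_{s''}(v)$. Hence $(s'',i'') \mapsto s''$ gives a bijection onto the sealed rigid surjections $s''\colon T^y \to S$ extending $s'$, with $y = i_{s''}(v)$. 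Now set $T' = T^{i'(w)}$ and decompose $T = (T'; x_1, \ldots, x_n) \oplus (T_1, \ldots, T_n)$ in the natural way. Using that $v$ and $w$ are distinct $\sqsubseteq$-leaves of $S$ (so the meet $v \wedge w$ lies in $S^w$), together with the embedding identity $y \wedge i'(w) = i''(v) \wedge i''(w) = i'(v \wedge w)$ and the fact that $y >_T i'(w)$, a direct comparison of children at the meet $i'(v \wedge w)$ shows that every proper $\sqsubseteq$-successor of $i'(v \wedge w)$ on the chain toward $y$ lies strictly above $i'(w)$ in the tree order, hence outside $T'$. So the maximal $\sqsubseteq$-predecessor of $y$ in $T'$ is the fixed vertex $i'(v \wedge w)$, and $y$ always lies in the single forest $T_{j^*}$ with $x_{j^*} = i'(v \wedge w)$.

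With this localization in hand, apply Lemma \ref{2.13} once, to the data $(S, T, s', r)$ with $T_{j^*}$ in the role of ``$T_1$'', obtaining a tree $V = (T'; x_1, \ldots, x_n) \oplus (V_1, \ldots, V_n)$. Given an $r$-coloring $\chi$ of $\textnormal{Hom}_{PSC_S}(S, V)$, transfer it to a coloring $\chi'$ of the sealed rigid surjections $u\colon V^x \to S$ extending $s'$ with $x \in V_{j^*}$ by setting $\chi'(u) = \chi(u, j_u)$, where $j_u\colon S \to V^x$ is the embedding with $j_u|_{S^w} = i'$ (using $T' \subseteq V$) and $j_u(v) = i_u(v)$; when $(u, j_u)$ fails to be a strong connection, $\chi'(u)$ is set arbitrarily. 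The Ramsey conclusion of Lemma \ref{2.13} then produces a rigid surjection $t\colon V \to T$ extending the identity on $T'$ such that $\chi'$ is constant on the relevant compositions.

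To finish, verify the composition identity: for $(s'',i'') \in \textnormal{Hom}_{PSC_S}^{(s',i',0)}(S,T)$ with $y = i''(v)$,
\[ (t, i_t) \circ (s'', i'') = \bigl(s'' \circ t|_{V^{i_t(y)}},\; i_t \circ i''\bigr). \]
Using the identity $i_{s'' \circ t} = i_t \circ i_{s''}$ (as in the proof of Proposition \ref{4.5}) together with the identity-on-$T'$ property of $i_t$, one sees that $i_t \circ i''$ satisfies the defining properties of $j_u$ for $u = s''\circ t|_{V^{i_t(y)}}$, so $i_t \circ i'' = j_u$ and $\chi((t,i_t)\circ (s'',i'')) = \chi'(u)$ is constant over all $(s'',i'')$. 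The principal obstacle is the localization step at the end of the first paragraph: without the identification of the fixed forest $T_{j^*}$, one would be forced to iterate Lemma \ref{2.13} over all the $T_j$ and combine the resulting monochromatic colors via a product-Ramsey argument; once the localization is in place, the remainder of the proof is a routine bijective translation between connections in the $a=0$ case and sealed rigid surjections.
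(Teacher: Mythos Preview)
Your proposal is correct and follows essentially the same strategy as the paper: identify the $a=0$ fibre with sealed rigid surjections extending $s'$, localize the landing point $i''(v)$ to a single forest attached at $i'(v\wedge w)$, apply Lemma~\ref{2.13} once, and translate back via $j_u$. Two small remarks: (i) your premise that ``$v$ and $w$ are distinct $\sqsubseteq$-leaves'' is false in general (when $w\sqsubset v$, $w$ is not a leaf), but your conclusion $v\wedge w\in S^w$ and the localization still hold trivially in that case; (ii) your decomposition with base $T'=T^{i'(w)}$ uses all attachment points along the root-to-$i'(w)$ path, whereas the paper takes only the segment from $i'(v\wedge w)$ to $i'(w)$ and reattaches the remaining side branches afterward---these are equivalent bookkeepings of the same argument.
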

\begin{proof}
	Fix a tree $S$, $T\in \text{Ob}(PSC_S)\backslash S$, $(s',i',0)\in \partial(\text{Hom}_{PSC_S}(S,T))$, and $r>0$. If $v$ and $w$ are the $\leq_S$ largest and second largest elements of $S$ respectively, then we let $x=v\wedge_S w$. 	 
	We rewrite $T$ as some $(T';y_1,\dots,y_n)\oplus(T_1,\dots,T_n)$ so that we can apply Lemma 5.4. Hence we construct forests $T_1,...,T_n$. Let $i'(x)=y_1,...,y_n=i'(w)$ enumerate the set $\{y\in V| i(x)\sqsubseteq_Ty\sqsubseteq_T i'(w)\}$. For each $y_m$, let $$T_m=\{z\in T| y_m\sqsubseteq_T z, \text{ }i'(x)< z\text{, and if }m\neq n,\text{ then }y_{m+1}\not\sqsubseteq_T z \}$$ These are forests with the inherited relations. We define the subtree  $$T'=T^{i'(w)}\cup\{y\in T| i'(x)<_T y\text{ and }i'(x)\not\sqsubseteq_t y\}$$ Note that $T=(T';y_1,\dots,y_n)\oplus(T_1,\dots,T_n)$ since this construction adds back all forests we removed from $T$ to obtain $T'$.  We claim that for any $(s,i)\in \text{Hom}_{PSC_{(s',i',0)}}(S,T)$, $i(v)\in T_1$. Indeed, since $i$ is an embedding which extends $i'$, $$i(v)\wedge_T i(w)=i(v)\wedge_T i'(w)=i(x)=i'(x)$$ thus $i'(x)\sqsubseteq_T i(v)$. Next we show that $y_2\not\sqsubseteq i(v)$ by contradiction. If $y_2\sqsubseteq_T i(v)$ then $y_2=i'(w)\wedge_T i(v)$ which contradicts $i(v)\wedge_T i'(w)=i'(x)$.
	
	 Now apply Lemma \ref{2.13} to the trees $S$, $(T^{i'(w)};y_1,\dots,y_n)\oplus (T_1,\dots,T_n)$, the rigid surjection $s'$ and the fixed $r>0$. Thus we obtain a tree $V=(T^{i'(w)};y_1,\dots,y_n)\oplus(V_1,\dots,V_n)$, we claim that $V'=(T';y_1,\dots,y_n)\oplus(V_1,\dots,V_n)$ satisfies the Ramsey property.
	
Fix a coloring $\chi\colon \text{Hom}_{PSC_{(s',i',0)}}(S,V')\to r$. Define a coloring $\chi'$ of sealed rigid surjections $u\colon V^z\to S$ which extend $s'$ and where $z\in V_1$ by $\chi'(u)=\chi(u,j)$ where $j$ extends $i'$ with $j(v)=i_u(v)=z$. This is a well defined coloring, since $(u,j)\in\text{Hom}_{PSC_{(s',i',0)}}(S,V')$. Lemma 5.4 states that there is a rigid surjection $t\colon V\to (T^{i'(w)};y_1,\dots,y_n)\oplus(T_1,\dots,T_n)$ which extends the identity on $T^{i'(w)}$, so that $$t\circ \{s\colon T^y\to S| \ s \text{ is a sealed rigid surjection which extends }s' \text{ and }y\in T_1\}$$ is $\chi'$ monochromatic. We can extend $t$ to a rigid surjection $t'\colon V'\to T$ by letting it be the identity on the set $\{y\in T| i'(x)<_T y\text{ and }i'(x)\not\sqsubseteq_t y\}$. Note that any $(s,i)\in \text{Hom}_{PSC_{(s',i',0)}}(S,T)$ has $i$ extending $i'$ with $i(v)=i_s(v)$. Thus each $(t,i_t)\circ (s,i)$ is of the form $(u,j)$ where $$u=t\circ \{s\colon T^y\to S| \ s \text{ is a sealed rigid surjection which extends }s' \text{ and }y\in T_1\}$$ while $j$ is given in the definition of $\chi'$. Thus, \begin{center}$(t',i_{t'})\circ\text{Hom}_{PSC_{(s',i',0)}}(S,T)$ is $\chi$ monochromatic. \end{center}
\end{proof}
It remains to prove a Ramsey statement for the case where $a=1$. Our proof of the case uses Propsition 5.1 to propagate Lemma 5.9. For this goal, we define a new functor $\gamma$. Fix $S$  a tree with at least two elements, $T\in\text{Ob}(PSC_S)\backslash S$, and $(s,i,1)\in \partial(\text{Hom}_{PSC_S}(S,T))$. The map $\gamma\colon PSC_{(s,i,1)}\to  PSC_{(s,i,0)}$ is defined as the identity on objects and all morphisms except in $\text{Hom}_{PSC_{(s,i,1)}}(S,T)$. If $(q,j)\in \text{Hom}_{PSC_{(s,i,1)}}(S,T)$, we let $\gamma(q,j)=(q',j')$ where $q'=q^{i_q(v)}$, while we define $j'$ by $j'^{w}=j^{w}$ and $j'(v)=i_q(v)$. So the functor $\gamma$ restricts $q$ but doesn't change the surjection, while the injection is modified so that $j'(v)=i_q(v)$. The map $\gamma$ is a well defined functor, it is also clearly frank at $T$. By Proposition \ref{2.4} and Lemma \ref{4.7} it is enough to show that $\gamma$ fulfills condition (FP).
\begin{lemma}\label{4.8}
The functor $\gamma$ has (FP).
\end{lemma}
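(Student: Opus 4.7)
The strategy is to select the element of $e$ whose $v$-image sits highest in $\leq_T$, build $V$ so that the corresponding initial subtree of $T$ lives inside $V$, and reduce the remaining Ramsey step to Lemma~\ref{2.13}. Pick $f'=(q_*,j_*)\in e$ so that $z:=j_*(v)$ is $\leq_T$-maximal among $\{j'(v):(q',j')\in e\}$; then $T^{j'(v)}\subseteq T^z$ for every $(q',j')\in e$. A direct computation using the $PSC$ composition formula shows that any $g=(t,i_t)\in\text{Hom}_{PSC_{(s,i,1)}}(T,V)$ with $t\restriction T^z=\text{Id}_{T^z}$ (equivalently $i_t\restriction T^z=\text{Id}_{T^z}$, by minimality of $i_t$) satisfies $g\circ e'=e'$ for every $e'\in e$. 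Choosing $g'\in\text{Hom}_{PSC_{(s,i,1)}}(T,V)$ with the same identity-on-$T^z$ property therefore makes the (FP)-agreement $g'\circ e'=\gamma(g)\circ e'$ automatic (recall $\gamma$ is the identity on $\text{Hom}(T,V)$). It remains only to produce such a $V$, with $T^z$ as an initial subtree, and to show that for every $r$-coloring $\chi$ of $\text{Hom}_{PSC_S}(S,V)$ some $g$ with $t\restriction T^z=\text{Id}_{T^z}$ makes $g\circ\text{Hom}^{f'}_{PSC_{(s,i,1)}}(S,T)$ $\chi$-monochromatic.

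\textbf{Plan (continued).} To build $V$, decompose $T$ as $(T^{i(w)};y_1,\dots,y_n)\oplus(T_1,\dots,T_n)$ so that one of the forests (say $T_1$) is the $\sqsubseteq$-subtree containing $\{z\}\cup F_z$, where $F_z=\{y\in T:z\sqsubsetneq y\}$, attached at the appropriate $\sqsubseteq$-predecessor of $z$ in $T^{i(w)}$. Apply Lemma~\ref{2.13} to $S$, this decomposition, the rigid surjection $s:T^{i(w)}\to S^w$ coming from the triple $(s,i,1)$, and $r$ colors; then, following the proof of Lemma~\ref{4.7}, enlarge the base of the resulting tree from $T^{i(w)}$ to a subtree containing $T^z$ so that $V$ contains $T^z$ as an initial subtree and still supports a Ramsey-witnessing rigid surjection extending the identity on $T^z$. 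Given $\chi$, define an auxiliary $r$-coloring $\chi'$ of the sealed rigid surjections $u':V^x\to S$ extending $s$ with $x\in V_1$ by $\chi'(u'):=\chi(u',\widehat{\jmath}_{u'})$, where $\widehat{\jmath}_{u'}$ is the map sending $v$ to $x$ and agreeing with $i$ on $S^w$ (assigning an arbitrary default value when this map fails to be an embedding). Lemma~\ref{2.13} then yields $t:V\to T$ extending the identity on $T^z$ so that $\{q\circ t^{i_t(j(v))}:(q,j)\in\gamma^{-1}(f')\}$ is $\chi'$-monochromatic; setting $g=(t,i_t)$, the composite $g\circ(q,j)=(q\circ t^{i_t(j(v))},\,i_t\circ j)$ satisfies $i_t\circ j=\widehat{\jmath}_{q\circ t^{i_t(j(v))}}$, so $\chi(g\circ(q,j))$ is constant in $(q,j)\in\gamma^{-1}(f')$, which is exactly what is needed.

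\textbf{Main obstacle.} The chief technical point is that the embeddings $i_t\circ j$ arising in the compositions $g\circ(q,j)$ are \emph{not} canonical: $i_t\circ j\neq i_{q\circ t^{i_t(j(v))}}$ precisely because $(q,j)$ sits in the $a=1$ fiber (so $j(v)\neq i_q(v)$). Consequently Lemma~\ref{2.13}, which controls only sealed rigid surjections (equivalently, strong connections with canonical embedding), does not directly color the connections we actually care about. The remedy, encoded in the definition of $\chi'$, is that each such non-canonical embedding is uniquely determined by its value at $v$ (the top leaf of the domain $V^x$) together with its restriction to $S^w$ (fixed as $i$ by the ambient data); this lets us package the embedding information into a coloring of sealed rigid surjections alone, which is exactly the input Lemma~\ref{2.13} accepts.
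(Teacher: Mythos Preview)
Your reduction in the first paragraph---choosing $f'=(q_*,j_*)\in e$ with $z=j_*(v)$ $\leq_T$-maximal and observing that any $g=(t,i_t)$ extending the identity on $T^z$ makes the (FP) agreement automatic---is correct and matches the paper's argument.

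The gap is in the second step. The elements of $\gamma^{-1}(f')$ are pairs $(q,j)$ with $q\colon T^{j(v)}\to S$ satisfying $i_q(v)=z<j(v)$; in particular $q$ is \emph{not} a sealed rigid surjection. Lemma~\ref{2.13} only controls sets of the form $\{s\circ t^{i_t(y)}:s\colon T^y\to S\text{ sealed},\,y\in T_1\}$, so it says nothing about the compositions $q\circ t^{i_t(j(v))}$ you need. Your auxiliary coloring $\chi'$ is defined on sealed $u'\colon V^x\to S$, but for $(q,j)\in\gamma^{-1}(f')$ the map $u'=q\circ t^{i_t(j(v))}$ has $i_{u'}(v)=i_t(i_q(v))=i_t(z)<i_t(j(v))$, so $u'$ is not sealed and $\chi'(u')$ is undefined. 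The ``packaging'' trick in your Main Obstacle paragraph records the embedding value at $v$ correctly, but it does not repair the fact that the underlying surjection fails to be sealed; Lemma~\ref{2.13} simply does not accept such inputs.

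The paper's remedy is different: it enlarges the \emph{codomain} rather than the domain. Set $S_+=(S;v)\oplus 1$, adding a single node $+$ above $v$. Each $(q,j)\in\gamma^{-1}(f')$ then corresponds to a pair $(p,l)\colon T^{j(v)}\leftrightarrows S_+$ with $p(j(v))=+$, $p=q$ elsewhere, $l(+)=j(v)$, $l(v)=i_q(v)=z$, and $l^w=i$. Now $i_p(+)=j(v)$ is the top of the domain, so $p$ \emph{is} sealed and $(p,l)$ lies in $\text{Hom}_{PSC_{(q_*,j_*,0)}}(S_+,T)$. This converts the $a=1$ problem for $S$ into an $a=0$ problem for $S_+$, to which Lemma~\ref{4.7} (already proved) applies directly. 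The missing idea in your attempt is precisely this $S_+$ construction.
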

\begin{proof}	Fix $S$ a tree, $T\in\text{Ob}(PSC_S)\backslash S$, $r>0$, $(s,i,1)\in \partial(\text{Hom}_{PSC_S}(S,T))$, and\\ $e\subseteq \gamma(\text{Hom}_{PSC_{(s,i,1)}}(S,T))$. Let $v$ be the largest $\leq_S$ element of $S$. We need to show that there are $V\in\textnormal{Ob}(PSC_{(s,i,0)})$, $(q,j)\in e$ and $(u,i_u)\in \text{Hom}_{PSC_{(s,i,0)}}(T,V)$, so that for each $r$-coloring $\chi\colon \textnormal{Hom}_{PSC_S}(S,V)\to r$, there is a $(t,i_t)\in \textnormal{Hom}_{PSC_S}(T,V)$ for which\begin{center} $(t,i_t)\circ \rm{Hom}_{PSC_{S}}^{(q,j)}(S,T)$ is monochromatic and $(u,i_u)\circ (q,j)=(t,i_t)\circ (q',j')$\end{center} for all $(q',j')\in e$. Fix $(q,j)\in e$ such that $j(v)\geq_T j'(v)$ for all $(q',j')\in e$, where $v$ is the largest $\leq_S$ element of $S$.
	
We claim that if there is a tree $V$ so that $T^{j(v)}$ is an initial subtree of $V$ and for any coloring $\chi\colon \rm{Hom}_{PSC_{(s,i,1)}}(S,V)\to r$, there is a $(t,i_t)\in \rm{Hom}_{PSC_{(s,i,1)}}(T,V)$ for which $(t,i_t)\circ \rm{Hom}_{PSC_{S}}^{(q,j)}(S,T)$ is monochromatic and $(t^{j(v)},i_{t}^{j(v)})=\rm{Id}_{T^{j(v)}}$, then $\gamma$ satisfies (FP) at $S,T$.

 Let $V$ be the tree that is given by the assumption and $(u,i_u)$ be any function which satisfies $(u^{j(v)},i_{u}^{j(v)})=\rm{Id}_{T^{j(v)}}$. It is enough to show that for any $(t,i_t)\in \text{Hom}_{PSC_S}(V,T)$ so that $(t^{j(v)},i_{t}^{j(v)})=\text{Id}_{T^{j(v)}}$, $$(u,i_u)\circ (q,j)=(t,i_t)\circ (q',j')$$ for all $(q',j')\in e$. By the definition of composition in $PSC_{(s,i,0)}$ we need to show that $(u^{j'(v)},i_{u}^{j'(v)})\circ (q',j')=(t^{j'(v)},i_{t}^{j'(v)})\circ (q',j')$, but since $j'(v)\leq_T j(v)$, $$(u^{j'(v)},i_{u}^{j'(v)})=(t^{j'(v)},i_{t}^{j'(v)})=\text{Id}_{T^{j'(v)}}.$$
Hence we have proven the claim.

It remains to construct the tree $V$. We start by defining a tree $S_+=(S;v)\oplus 1$. This tree is obtained by adding one node called $+$ that is $\sqsubseteq_{S_+}v$. By Lemma \ref{4.7} there is a tree $V$, so that $V\to (T)^{S_+}_{r}$ in $PSC_{(q,j,0)}$.  We will show that this $V$ is as desired by coding elements of $\text{Hom}_{PSC_{(q,j,0)}}(S_+,V)$ as elements of $\text{Hom}_{PSC_{(s,i,1)}}(S,V)$.
	
Fix a coloring $\chi\colon\text{Hom}_{PSC_{(s,i,1)}}(S,V)\to r$.
	 We define a new coloring$$\chi'\colon \text{Hom}_{PSC_{(q,j,0)}}(S_+,V)\to r$$ by letting $\chi'(u,k)=\chi(u',k')$ where $(u',k')$ is constructed as follows.  Let $u'(y)=u(y)$ whenever $u(y)\in S$ and $u'(i_u(+))=v$. Similarly, let $k'(z)=k(z)$ for all $z\in S\backslash v$ and let $k'(v)=k(+)$. So $(u',k')$ extends $(u,k)$, but turns the vertex $+$ into a $k'(v)$ which is not $i_{u'}(v)$. Indeed, $$i_{u'}(v)<_T i_u(+)\leq_T k(+)=k'(v)$$It is easy to check that $(u',k')$ is a connection. Since $(u',k')$ extends $(q,j)$ on $S^{w}$, $\partial(u',k')=(s,i,1)$, hence $u',k'\in \text{Hom}_{PSC_{(s,i,1)}}(S,V)$. Therefore, $\chi'$ is a well defined coloring. By Lemma 5.9, there exists a $(t,i_t)\in\text{Hom}_{PSC_{(q,j,0)}}(T,V)$ so that \begin{center} $(t,i_t)\circ\text{Hom}_{PSC_{(q,j,0)}}(S_+,T)$ is $\chi'$-monochromatic.\end{center} It remains to show that for each \begin{center} $(p',l')\in \text{Hom}_{PSC_{(s,i,1)}}^{(q,j)}(S,T)$, $(t,i_t)\circ (p',l')$ is of the form $(u',k')$\end{center} where $(u,k)=(t,i_t)\circ (p,l)$ and $(p,l)\in\text{Hom}_{PSC_{(q,j,0)}}(S_+,T)$. Indeed, given this claim each morphism of the form $(t,i_t)\circ (p',l')\in \text{Hom}_{PSC_{(s,i,1)}}^{(q,j)}(S,T)$ must have the same color under $\chi$, namely the color of $(t,i_t)\circ (p,l)$ under $\chi'$.

Let $(p',l')\in\text{Hom}_{PSC_{(s,i,1)}}^{(q,j)}(S,T)$ and define  $p\colon T^{l'(v)}\to S_+$ as $p(y)=p'(y)$ for all $y\neq l'(v)$ and $p(l'(v))=+$. We let $l\colon S_+\to T^{l'(v)}$ be defined by $l=l'$ on $S^{w}$, $l(v)=i_{p'}(v)$, and $l(+)=l'(v)$.  We show that $$(p,l)\in\text{Hom}_{PSC_{(q,j,0)}}(S_+,T)$$Since $(p',l')$ is a connection it is easy to check that $(p,l)$ is a connection. Since $i_p(+)=l(+)$ and $\gamma(p',l')=(q,j)$, we have that $\partial(p,l)=(q,j,0)$, hence $$(p,l)\in\text{Hom}_{PSC_{(q,j,0)}}(S_+,T)$$
Define  $(u,k)=(t,i_t)\circ (p,l)$, we show  that $(u',k')=(t,i_t)\circ (p',l')$. The domain of $(t,i_t)\circ (p',l')=V^{i_t(l'(v))}=V^{i_t(l(+))}$ which is the domain of $u'$. We show that $u'=p'\circ t$. If $u(y)\in S$ then $p(t(y))\in S$ so by the definition of $p'$, $$p'(t(y))=p(t(y))=u(y)=u'(y)$$ Next $u'(i_u(+))=v$ and since $i_u=i_t(i_p)$, $$p'(t(i_u(+)))=p'(i_p(+))=v$$ Similarly, $$k'(z)=k(z)=i_t(l(z))=i_t(l'(z))$$ for all $z\in S^{w}$ and $$k'(v)=u(+)=i_t(l(+))=i_t(l'(v))$$ Thus $k'=i_t\circ l'$ as desired.
\end{proof}
We recap how we proved that $\partial$ satisfies condition (FP).  By Lemma \ref{4.6}, we needed to prove that for all trees $S$, $T\in \text{Ob}(PSC_S)\backslash S$, $(s,i,a)\in \partial(\text{Hom}_{PSC_S}(S,T))$, and $r>0$ there is a $V\in \text{Ob}(PSC_{(s,i,a)})$ so that $V\to (T)^{S}_{r}$. Lemma \ref{4.7} proved the case where $a=0$. For the case where $a=1$,  we proved that $\gamma$ satisfies (P). This completes the proof of Theorem 5.3  by Proposition \ref{2.4} and Lemma \ref{4.6}.
\subsection{Theorem 3.4 implies Self-Dual Ramsey Theorem for Linear Orders}
The proof of the Dual Ramsey Theorem for trees, implies the Graham-Rothschild Theorem as shown in \cite{22}. Similarly Theorem 3.4 extends the Self-Dual Ramsey Theorem for linear orders in \cite{12}. We show this in two steps, first we prove that connections which send the smallest element to the smallest element have the Ramsey Property. The reason for this condition is that we view increasing injections as embeddings between trees and embeddings for trees preserve the root. Then we prove the Self-Dual Ramsey Theorem for linear orders using this result.

\begin{proof}[Proof Theorem \ref{2.18} implies the Self-Dual Ramsey Theorem for linear orders]
	Let $Conn_{L*}$
	\\ be the category whose objects are linear orders and whose morphisms are connections $(s,i)$ so that $i(\text{min}(K))=\text{min}(L)$. We will show that this category has the Ramsey property. Fix linear orders $K,L$ which we view as trees (with $\sqsubset_K=\leq_K$ and the same for $L$) and fix a number $r>0$. By Proposition 4.2, there is a tree $T$ and a connection $(s,i)\colon T \leftrightarrows L$ so that $i(x)\neq i_s(x)$, for all $x\in L$ with at most one immediate successor that are not the root. Since $L$ is a linear order, this holds for all $x\in L$ that are not the minimum element. By Theorem 3.4 there is a tree $V$ so that for any coloring $\chi\colon \textnormal{Hom}_{Conn_T}(K,V)\to r$, there is a $(u,k)\in\textnormal{Hom}_{Conn_T}(T,V)$ such that for any $(t,j)\in\textnormal{Hom}_{Conn_T}(K,T)$, $\chi((u,k)\circ (t,j))$ depends only the set, $$B=\{x\in K\backslash\text{min}(K)\colon i_t(x)\neq j(x)\}.$$
	By construction, the set $B=K\backslash$min$(K)$ for all $(p,q)\in(s,i)\circ \text{Hom}_{Conn_T}(K,L)$. Hence we have that $V\to (L)^{K}_{r}$ in $Conn_T$. We view $V$ as a linear order with the $\leq_V$ order. Note $\text{Hom}_{Conn_T}(K,L)$ are the linear order connections $(s,i)$ where $i$ sends the minimum element of $K$ to the minimum element of $L$ ($s$ always maps the smallest elements to each other). We have thus obtained a Ramsey theorem for $\text{Conn}_{L*}$. Next we prove the full Self-Dual Ramsey Theorem for linear orders using a functor which fulfills condition (P).
	
	Let $Conn_L$ be the category whose objects are linear orders and morphisms are connections, while $(Fin,\leq)$ is the category whose objects are finite linear orders with morphisms being increasing injections. Fix a specific linear order $K$, we define a functor $\gamma\colon Conn_L\to (Fin,\leq)$ on objects by $\gamma(K)=1$ and $\gamma(L)=L$ for all $L\neq K$. On morphisms, if $(s,i)\in\text{Hom}_{Conn_L}(K,L)$ where $L\neq K$, let $\gamma(s,i)$ be the function that sends $1$ to $i(\text{min}(K))$. If $(s,i)\in \text{Hom}_{Conn_L}(L,M)$ where $L,M$ are both not $k$, let $\gamma(s,i)=i$. The map $\gamma$ is a functor and is frank at all $L\neq K$. Since $(Fin,leq)$ has the Ramsey Property by Ramsey's Theorem, it remains to show that $\gamma$ satisfies (FP) at $K$.
	
	Let $L\neq K$ be another linear order and $r>0$. For any $e\subset\gamma(\text{Hom}_{Conn_T}(K,L))$ let $i'$ be the injection with the largest $i'(1)$ which we call $n$. We define $$L'=\{x\in L\colon x\geq n\}$$ by the above there is a linear order $M'$ so that $M\to (L')^{k}_{r}$ in $Conn_{T}$. Let $M$ be the linear order which adds $n$ predecessors to $M'$ and $j\colon L\to M$ be an increasing injection which extends the identity on the first $n$ elements. We claim these $M,j'$ witness that $\gamma$ satisfies (FP). 
	
	Fix a coloring $\chi\colon \text{Hom}_{Conn_L}(K,M)\to r$, then define $\chi'\colon \text{Hom}_{Conn_{T}}(K,M')\to r$, by $\chi'(u,k)=\chi(u',k)$ where  $u'(y)=\text{min}(K)$ for all $y<n$, and $u'(y)=u(y)$ otherwise. Then there is a $j\in\text{Hom}_{Conn_{T}}(L',M')$ so that \begin{center}$j\circ \text{Hom}_{Conn_T}(K,L')$ is $\chi'$ monochromatic.\end{center} Let $(t',j')\colon M\leftrightarrows L$, extend $(t,j)$ so that $t'(x)=x$ and $j'(x)=x$ for all $x<n$. By construction $(t',j')\circ \text{Hom}_{Conn_L}^{i'}(K,L)$ is $\chi$ monochromatic, and $j'\circ i=p\circ i$ for all $i\in e$.
\end{proof}
	\bibliographystyle{plain}
		\bibliography{reference}
	\end{document}